\tikzset{style={anchor=base,baseline={([yshift=-1ex]current bounding box.center)}}}
\definecolor{mygreen}{RGB}{23,103,1}
\definecolor{Zgris}{rgb}{0.85,0.85,0.85}
\newcommand{\red}[1]{\textcolor{red}{#1}}
\newcommand{\blue}[1]{\textcolor{blue}{#1}}
\newcommand{\eqdef}{\mbox{\,\raisebox{0.2ex}{\scriptsize\ensuremath{\mathrm:}}\ensuremath{=}\,}}
\newcommand{\NN}{\mathbb{N}}
\newcommand{\RR}{\mathbb{R}}
\newcommand{\PP}{\mathbb{P}}
\newcommand{\PW}{\mathbf{PW}}
\newcommand{\SA}{\mathcal{A}}
\newcommand{\SP}{\mathcal{P}}
\newcommand{\ST}{\mathcal{T}}
\newcommand{\PForest}{\mathfrak{F}}
\newcommand{\PTree}{\mathfrak{T}}
\newcommand{\PParticular}{\mathfrak{P}}
\newcommand{\FQSym}{\mathbf{FQSym}}
\newcommand{\WQSym}{\mathbf{WQSym}}
\newcommand{\PQSym}{\mathbf{PQSym}}
\newcommand{\Ker}{\operatorname{Ker}}
\newcommand{\Node}{\operatorname{Node}}
\newcommand{\Prim}{\operatorname{Prim}}
\newcommand{\TPrim}{\operatorname{TPrim}}
\newcommand{\gcdot}{\slash}
\newcommand{\dcdot}{\backslash}
\newcommand{\ins}{\blacktriangleright}
\crefname{coro}{Corollary}{Corollaries}
\crefname{defi}{Definition}{Definitions}
\crefname{lem}{Lemma}{Lemmas}
\newtheorem{theorem}{Theorem}
\newtheorem{lem}[theorem]{Lemma}
\newtheorem{prop}[theorem]{Proposition}
\theoremstyle{definition}
\newtheorem{defi}[theorem]{Definition}
\theoremstyle{remark}
\newtheorem{ex}[theorem]{Example}
\newtheorem{rem}[theorem]{Remark}
\tikzstyle{alert} = [color=red, line width = 1.5]
\tikzstyle{bluealert} = [color=blue, line width =1.5]
\tikzstyle{big} = [line width = 1.5]
\tikzstyle{Point} = [fill, radius=0.08]
\tikzstyle{RedPoint} = [fill, radius=0.09, color = red]
\tikzstyle{Leaf} = [color = gray]
\tikzstyle{Red} = [color = red]
\tikzstyle{Blue} = [color = blue]
\tikzstyle{Green} = [color = mygreen]
\tikzstyle{Gray} = [color = gray]
\newcommand\Item[1][]{
  \ifx\relax#1\relax  \item \else \item[#1] \fi
  \abovedisplayskip=0pt\abovedisplayshortskip=0pt~\vspace*{-\baselineskip}}
\title{Basis of totally primitive elements of $\WQSym$}
\author{Hugo MLODECKI \thanks{\href{mailto:hugo.mlodecki@lri.fr}{hugo.mlodecki@lri.fr}}}
\address{Université Paris-Saclay, CNRS, Laboratoire de recherche en informatique, 91405, Orsay, France.}
\abstract{By Foissy's work, the bidendriform structure of the Word
  Quasisymmetric Functions Hopf algebra (WQSym) implies that it is isomorphic to
  its dual.  However, the only known explicit isomorphism does not respect the
  bidendriform structure. This structure is entirely determined by so-called
  totally primitive elements (elements such that the two half-coproducts are 0).
  In this paper, we construct a basis indexed by a new combinatorial family
  called biplane forests in bijection with packed words.  In this basis,
  primitive elements are indexed by biplane trees and totally primitive elements
  by a certain subset of trees.  Thus we obtain the first explicit basis for the
  totally primitive elements of WQSym.}
\keywords{bidendriform Hopf algebras, Word Quasisymmetric Functions, packed
  words, primitive elements}
\begin{document}

\maketitle

\section*{Introduction}

The varied zoo of combinatorial Hopf algebras is much better understood when one
considers the extra algebraic structures that each algebra can have. In this
light, operad related theories are very useful. Some important examples are the
Hopf algebras of non-commutative and quasi-symmetric functions related to the
theory of free Lie algebras~\cite{NCSF1}. More recently, the Hopf algebra of
binary trees was identified as the free dendriform algebra on one
generator~\cite{LodRon_PBT}.

Closed related examples include the algebras $\FQSym$ of permutations of
Malvenuto-Reutenauer~\cite{MalReu} and the Hopf algebra $\WQSym$ of surjections
or, equivalently, ordered set partitions~\cite{Hivert_thesis}. These two can be
seen as noncommutative versions of the algebra of quasi-symmetric
functions. Though the first one is trivially self-dual, it is only by a deep
theorem of Foissy~\cite{foissy_2007} that one can show that the second one is
too. In particular, until Vargas's work~\cite{Vargas_thesis}, no concrete
isomorphism was known.

The first study of $\WQSym$ structure is due to
Bergeron-Zabrocki~\cite{berzag}. They showed that it is free and
co-free. Independently, Novelli-Thibon endowed it with a bidendriform bialgebra
structure~\cite{novthi_2006}. Recall that a dendriform algebra is an abstraction
of a shuffle algebra where the product is split in two half-products. If the
coproduct is also split, and certain compatibilities hold, one gets the notion
of bidendriform bialgebra~\cite{foissy_2007}.

Building on the work of Chapoton and Ronco~\cite{ronco_2000,chapoton_2002},
Foissy~\cite{foissy_2007} showed that the structure of a bidendriform bialgebra
is very rigid. In particular, he defined a specific subspace called the space of
totally primitive elements, and showed that it characterizes the whole
structure. This does not only re-prove the freeness and co-freeness, as well as
the freeness of the primitive lie algebra, but also shows that the structure of
a bidendriform bialgebra depends only on its Hilbert series (the series of
dimensions of its homogeneous components). In particular, any such algebra is
isomorphic to its dual. However, Foissy's isomorphism is not fully explicit and
depends on a choice of a basis of the totally primitive elements. To this end,
one needs an explicit basis of the totally primitive elements. Foissy described
such a construction for $\FQSym$~\cite{foissy_2011}. Our long term goal is to
effectively apply Foissy's construction to $\WQSym$ in order to build an
explicit isomorphism respecting the bidendriform structure. With this objective
in mind, we provide an explicit basis called the totally primitive elements
using a bijection with certain families of trees.

We begin with a background section presenting two rigidity structure theorems
that prove, among other things, the self-duality of any bidendriform bialgebra
(\cref{brace,prim_tot}). We then define the notion of packed word as well as the
specific basis of~$\WQSym$, which will be the starting point of our
combinatorial analysis (\cref{half_prod,left_coprod,right_coprod}).

\cref{sect2_pw_trees} is devoted to the combinatorial construction of biplane
forests (\cref{forest_def}) which are our first key ingredient. They record a
recursive decomposition of packed words according to their global descents
(\cref{gd_fact}) and positions of the maximum letter (\cref{phi_fact}). We show
that the cardinalities of some specific sets of biplane trees match the
dimensions of primitive and totally primitive elements (\cref{thm:bij}).

Finally in \cref{sect3_Pbasis} we construct a new basis of $\WQSym$ which
contains a basis for the primitive and totally primitve elements
(see~\cref{thm:P}). To do so we decompose the space of totally primitive
elements as a certain direct sum which matchs the combinatorial decomposition of
packed words (\cref{tot_stable}).


\section{Background}

\subsection{Cartier-Milnor-Moore theorems for Bidendriform bialgebras}

A bialgebra is a vector space over a field $K$, endowed with an unitary associative
product $\cdot$ and a counitary coassociative coproduct
$\Delta$ satisfying a compatibility relation called the Hopf relation
$\Delta(a\cdot b) = \Delta(a)\cdot\Delta(b)$.  In this paper all bialgebras are assumed to be graded
and connected (\emph{i.e.} the homogeneous component of degree $0$ is $K$). They
are therefore Hopf algebras, as the existence of the antipode is implied.

We now recall the elements of the definition of bidendriform bialgebras which are
useful for the comprehension of this paper. We refer to~\cite{foissy_2007} for
the full list of axioms.

First of all, a \emph{dendriform algebra} $A$ is a $K$-vector space,
endowed with two binary bilinear operations $\prec$, $\succ$ satisfying the
following axioms, for all $a, b, c\in A$:
\begin{align}
  \label{E1} (a \prec b) \prec c &= a \prec (b \prec c + b \succ c),\\
  \label{E2} (a \succ b) \prec c &= a \succ (b \prec c),\\
  \label{E3} (a \prec b + a \succ b) \succ c &= a \succ (b \succ c).
\end{align}
Adding together~\cref{E1,E2,E3} show that the product
$a \cdot b \eqdef a \prec b + a \succ b$ is associative. Adding a subspace of scalars,
this defines a unitary algebra structure on $K \oplus A$.  In this paper,
all the dendriform algebras are graded and have null $0$-degree component so
that the associated algebra is connected.

Dualizing, one gets a notion of \emph{co-dendriform co-algebra} which is a
$K$-vector space with two binary co-operations (\emph{i.e.}, linear maps
$A \to A\otimes A$) denoted by $\Delta_\prec$, $\Delta_\succ$ satisfying the
dual axioms of~\cref{E1,E2,E3}.  The sum of the two half
coproducts~$\tilde{\Delta}(a) \eqdef \Delta_\prec(a) + \Delta_\succ(a)$ is a
reduced coassociative coproduct. On $K \oplus A$, setting 
$\Delta(a) \eqdef 1 \otimes a + a \otimes 1 + \tilde{\Delta}(a)$ defines a
co-associative and co-unitary coproduct.

A \emph{bidendriform bialgebra} is a $K$-vector space which is both a dendriform
algebra and a co-dendriform co-algebra satisfying a set of four
relations~\cite{foissy_2007} relating respectively $\prec$ and $\succ$ with
$\Delta_\prec$, $\Delta_\succ$. Adding those four relations shows that $\cdot$ and
$\Delta$ as defined above defines a proper bi-algebra.

\bigskip

We recall here the relevant results of Foissy~\cite{foissy_2007} on the
rigidness of bidendriform bialgebras based on the works of Chapoton and
Ronco~\cite{ronco_2000,chapoton_2002}.

Let $A$ be a bidendriform bialgebra. We define~$\Prim(A)\eqdef
\Ker(\tilde{\Delta})$ as the set of \emph{primitive} elements of $A$. We also denote
by~$\SA(z)$ and~$\SP(z)$ the Hilbert series of $A$ and $\Prim(A)$ defined as
$\SA(z) \eqdef \sum_{n=1}^{+\infty}\dim(A_n)z^n$ and
$\SP(z) \eqdef \sum_{n=1}^{+\infty}\dim(\Prim(A_n))z^n$.
The present work is based on two analogues of the Cartier-Milnor-Moore
theorems~\cite{foissy_2007} which we present now.
The first one is extracted from the proof of~{\cite[Proposition~6]{foissy_2011}}:
\begin{prop} \label{left_prod} Let $A$ be a bidendriform bialgebra and let
  $p_1\dots p_n \in \Prim(A)$. Then the map
  \begin{equation}
    p_1 \otimes p_2 \otimes \ldots \otimes p_n
    \mapsto p_1 \prec (p_2 \prec (\ldots \prec p_n)\ldots).
  \end{equation}
  is an isomorphism of co-algebras from $T^+(\Prim(A))$ (the non trivial part
  of the tensor algebra with deconcatenation as
  coproduct) to $A$.  As a consequence, taking a basis $(p_i)_{i\in I}$ of
  $\Prim(A)$, the
  family~$(p_{w_1} \prec (p_{w_2} \prec (\dots \prec p_{w_n})\dots))_w$
  where $w=w_1\dots w_n$ is a non empty word on $I$ defines a basis of $A$.  This
  implies the equality of Hilbert series $\SA=\SP/(1-\SP)$.
\end{prop}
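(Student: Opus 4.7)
The plan is to show that the map
$$\Phi : T^+(\Prim(A)) \longrightarrow A, \qquad p_1 \otimes \cdots \otimes p_n \longmapsto p_1 \prec (p_2 \prec (\cdots \prec p_n)\cdots),$$
is an isomorphism of graded coalgebras from $T^+(\Prim(A))$ equipped with the reduced deconcatenation coproduct onto $A$ equipped with the reduced coproduct $\tilde{\Delta} = \Delta_\prec + \Delta_\succ$. Once this is granted, the basis statement is just the image of the standard basis of the tensor coalgebra, and the Hilbert series identity follows from
$$\mathrm{Hilb}\bigl(T^+(V)\bigr) \;=\; \sum_{n \ge 1} \mathrm{Hilb}(V)^n \;=\; \frac{\mathrm{Hilb}(V)}{1 - \mathrm{Hilb}(V)}$$
applied to $V = \Prim(A)$.

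The crucial preliminary step is a single-step compatibility: for every primitive $p$ and every $x \in A$,
$$\tilde{\Delta}(p \prec x) \;=\; p \otimes x \;+\; \sum_{(x)} (p \prec x_{(1)}) \otimes x_{(2)},$$
in Sweedler notation $\tilde{\Delta}(x) = \sum_{(x)} x_{(1)} \otimes x_{(2)}$. This identity is not itself one of the four bidendriform axioms but is obtained by summing the axioms that express $\Delta_\prec(p \prec x)$ and $\Delta_\succ(p \prec x)$ in terms of the coproducts of $p$ and $x$ and then discarding every term containing a tensor factor coming from $\tilde{\Delta}(p)$, which vanishes by primitivity. The main technical obstacle I expect is precisely the bookkeeping of the many mixed terms produced by the two bidendriform compatibilities and verifying that everything except the two terms above cancels.

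Granting this key identity, a short induction on $n$ yields
$$\tilde{\Delta}\bigl(\Phi(p_1 \otimes \cdots \otimes p_n)\bigr) \;=\; \sum_{i=1}^{n-1} \Phi(p_1 \otimes \cdots \otimes p_i) \otimes \Phi(p_{i+1} \otimes \cdots \otimes p_n),$$
which is exactly the deconcatenation coproduct transported by $\Phi$; hence $\Phi$ is a morphism of coalgebras. To conclude that $\Phi$ is a linear isomorphism I would argue that both sides are connected graded coalgebras and that tensor length on the source matches the coradical depth on the target. Surjectivity follows by induction on the degree: the degree-one image already contains all of $\Prim(A)$, and the key identity above shows that any element of $A_n$ is, modulo elements of strictly lower coradical depth that are handled inductively, of the form $p \prec x$ for a primitive $p$ and an $x$ of strictly smaller degree. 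Injectivity then follows because the associated graded map with respect to these filtrations is the identity on each layer $\Prim(A)^{\otimes n}$, so no non-zero homogeneous element of $T^+(\Prim(A))$ can map to zero.
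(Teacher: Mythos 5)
The paper itself does not prove this proposition: it is quoted verbatim from the proof of Proposition~6 in~\cite{foissy_2011}, so the only available comparison is with Foissy's argument. Your outline is indeed that argument: establish a one-step coproduct formula for $p \prec x$ with $p$ primitive, iterate it to show the map is a coalgebra morphism, then upgrade to an isomorphism. Your confidence that the unwanted terms cancel is justified, because Foissy's four compatibilities are written so that only the full reduced coproduct $\tilde{\Delta}(a)$ of the left factor appears (never $\Delta_\prec(a)$ and $\Delta_\succ(a)$ separately), so primitivity of $p$ kills them all at once. One warning about orientation: with the conventions actually in force in this paper one computes $\tilde{\Delta}(\RR_1 \prec \RR_{11}) = \RR_{11} \otimes \RR_1$, not $\RR_1 \otimes \RR_{11}$, so the surviving terms are $x \otimes p + \sum x_{(1)} \otimes (p \prec x_{(2)})$ and the morphism lands in the tensor coalgebra with the \emph{reversed} deconcatenation; the statement of the proposition glosses over this, and the paper silently compensates by reversing the order of the factors in \cref{PF}. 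This does not affect the basis or Hilbert series conclusions, but your identity as written would fail a direct check.

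There are two genuine gaps. First, the one-step identity is the entire content of the proposition and you explicitly leave it unverified; without carrying out the bookkeeping against the four axioms the proof is not complete. Second, and more seriously, your surjectivity argument is circular: the key identity computes $\tilde{\Delta}$ of elements that are already in the image, and says nothing about whether an arbitrary element of $A_n$ is of the form $p \prec x$ modulo terms of lower coradical depth --- that is essentially what surjectivity asserts. (Injectivity is fine: a morphism of connected graded, hence conilpotent, coalgebras that is injective on primitives is injective, and here the degree-one layer maps by the identity.) To finish one needs an independent input, namely Foissy's theorem that a connected bidendriform bialgebra is cofree as a coassociative coalgebra, which yields $\SA = \SP/(1-\SP)$ on its own; injectivity plus equality of graded dimensions then forces bijectivity. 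As written, you derive the Hilbert series identity \emph{from} the isomorphism while implicitly needing it (or an equivalent) \emph{to prove} the isomorphism, so the logical order must be repaired.
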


One can further analyze $\Prim(A)$ using the so-called \emph{totally primitive}
elements of $A$ defined as
$\TPrim(A) = \Ker(\Delta_\prec) \cap \Ker(\Delta_\succ)$ and
$\ST(z) = \sum_{n=1}^{+\infty}\dim(\TPrim(A_n))z^n$.  Recall that a brace algebra is a
$K$-vector space $A$ together with an $n$-multilinear operation denoted as
$\langle\ldots\rangle$ for all $n ≥ 2$ which satisfies certain relations (see \cite{ronco_2000} for
details).
\begin{theorem}[{\cite[Theorem~5]{foissy_2011}}] \label{brace} Let $A$ be a bidendriform
  bialgebra. Then $\Prim(A)$ is freely generated as a brace algebra by
  $\TPrim(A)$ with brackets given by
  \begin{multline*}
    \langle p_1, \dots, p_{n-1}; p_n\rangle \eqdef \sum_{i=0}^{n-1}\ (-1)^{n-1-i}\\
    (p_1 \prec (p_2 \prec (\cdots \prec p_i)\cdots)) \succ p_n \prec ((\cdots(p_{i+1} \succ p_{i+2}) \succ \cdots) \succ p_{n-1}).
  \end{multline*}
\end{theorem}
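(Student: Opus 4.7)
The plan is to proceed in three stages: (i) verify that the formula defines an $n$-ary operation $\Prim(A)^{\otimes n} \to \Prim(A)$; (ii) check the brace-algebra axioms for this operation; (iii) establish freeness of $\Prim(A)$ as a brace algebra over $\TPrim(A)$.

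For step (i), I would compute $\tilde\Delta$ of each summand $(p_1 \prec \cdots \prec p_i) \succ p_n \prec (\cdots \succ p_{n-1})$, expanding systematically via the four bidendriform compatibility relations between $\prec, \succ$ and $\Delta_\prec, \Delta_\succ$. When the $p_j$ are primitive, the contributions from $\tilde\Delta(p_j)$ drop out and only boundary terms of the form (iterated $\prec$-product)$\otimes$(iterated $\succ$-product) survive. The alternating signs $(-1)^{n-1-i}$ are calibrated precisely so that these boundary contributions cancel in pairs across consecutive values of $i$, yielding $\tilde\Delta \langle p_1, \dots, p_{n-1}; p_n\rangle = 0$. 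For step (ii), the brace identities reduce to purely dendriform equations which follow mechanically from~\cref{E1,E2,E3}, in the spirit of the original Chapoton-Ronco construction of the brace structure on primitives of a dendriform bialgebra.

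The core of the theorem is step (iii). Surjectivity of the natural map $\Psi \colon B(\TPrim(A)) \to \Prim(A)$ from the free brace algebra I would prove by induction on degree: given a primitive $x$ which is not totally primitive, use the coalgebra isomorphism $A \cong T^+(\Prim(A))$ provided by~\cref{left_prod} to peel off an explicit brace expression $y$ in primitives of strictly lower degree such that $x - y$ lies in $\TPrim(A)$. Injectivity then falls out of a Hilbert series comparison: the identity $\SA = \SP/(1-\SP)$ from~\cref{left_prod}, together with the known Hilbert series of a free brace algebra on a graded generating space (due to Chapoton), forces $\dim B(\TPrim(A))_n = \dim \Prim(A)_n$ in each degree, promoting surjectivity to an isomorphism.

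The main obstacle will be the inductive extraction in the surjectivity step: constructing the lower-degree brace expression $y$ and controlling both half-coproducts of $x - y$ simultaneously demands a careful interplay of the four bidendriform compatibilities, and this is where the rigidity phenomenon of bidendriform bialgebras does the real work. By contrast, steps (i) and (ii) are long but routine, as is the Hilbert series bookkeeping that completes step (iii).
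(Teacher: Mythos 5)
First, note that the paper itself offers no proof of \cref{brace}: it is quoted verbatim from Foissy (\cite[Theorem~5]{foissy_2011}), with the bracket formula going back to Chapoton and Ronco. So the comparison here is against Foissy's argument rather than against anything in this source. Your steps (i) and (ii) are a faithful outline of the Chapoton--Ronco part of the story (primitivity of the bracket via the four bidendriform compatibilities, brace relations reducing to~\cref{E1,E2,E3}), and I have no objection to them beyond the fact that they are only outlined.

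The genuine gap is in step (iii), and it sits exactly where you declare the work to be ``bookkeeping.'' Your injectivity argument compares the Hilbert series of the free brace algebra $B(\TPrim(A))$ with $\SP$. The free brace algebra on a graded space $V$ has Hilbert series $g$ determined by $g = \ST/(1-g)$ where $\ST$ is the series of $V$; combined with $\SA = \SP/(1-\SP)$ from \cref{left_prod}, the equality $\dim B(\TPrim(A))_n = \dim\Prim(A)_n$ is equivalent to $\ST = \SA/(1+\SA)^2$. That identity is precisely Foissy's Corollary~37, which this paper also quotes without proof; it is not a formal consequence of \cref{left_prod}, and it is itself one of the main rigidity statements, proved by constructing an explicit complement (a projector onto $\TPrim(A)$ analogous to an Eulerian idempotent). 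Surjectivity of $\Psi$ only yields $\dim B(\TPrim(A))_n \geq \dim\Prim(A)_n$, which is the wrong inequality for injectivity, so without an independent computation of $\ST$ your step (iii) does not close. The same missing projector also undermines the surjectivity step as stated: \cref{left_prod} decomposes $A$ over $\Prim(A)$, not $\Prim(A)$ over $\TPrim(A)$, so it does not by itself hand you a brace expression $y$ in lower-degree primitives with $x - y \in \TPrim(A)$; producing such a $y$ requires controlling $\Delta_\succ(x) = -\Delta_\prec(x)$ for a primitive $x$ and matching it with $\Delta_\succ$ of brackets, which is the substantive content of Foissy's direct-sum decomposition of $\Prim(A)$. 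In short, the architecture of your proof is the right one, but the two inputs you treat as routine (the dimension formula for $\TPrim$ and the extraction of $y$) are jointly equivalent to the theorem's hard core.
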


A basis of $\Prim(A)$ is described by ordered trees that are decorated with
elements of $\TPrim(A)$ where $p_n$ is the root and $p_1, \ldots, p_{n-1}$ are the
children (see \cite{ronco_2000,chapoton_2002,foissy_2011}). This is reflected on
their Hilbert series as~\cite[{Corollary~37}]{foissy_2007}:
$\ST = \SA / (1+\SA)^2$ or equivalently $\SP = \ST(1 + \SA)$.

Using \cref{left_prod,brace} together with a dimension argument, one can show the
following theorem:
\begin{theorem} [{\cite[Theorem~2]{foissy_2011}}] \label{prim_tot}
Let $A$ be a bidendriform bialgebra. Then $A$ is freely generated as a dendriform
  algebra by $\TPrim(A)$.
\end{theorem}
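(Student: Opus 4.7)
The plan is a universal-property argument closed by a dimension count. Write $T = \TPrim(A)$ and let $D(T)$ denote the free dendriform algebra on the graded vector space $T$ (Loday). The inclusion $T \hookrightarrow A$ is a morphism of graded vector spaces into a dendriform algebra, so by the universal property of $D(T)$ it extends uniquely to a graded morphism of dendriform algebras $\phi : D(T) \to A$. The goal is to show $\phi$ is a graded isomorphism.

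I would first establish surjectivity. By \cref{brace}, every element of $\Prim(A)$ lies in the brace subalgebra of $\Prim(A)$ generated by $T$. The explicit bracket formula in \cref{brace} expresses each $\langle p_1,\dots,p_{n-1}; p_n\rangle$ as a polynomial in the $p_i$ using only $\prec$ and $\succ$; iterating this observation shows that the brace subalgebra generated by $T$ is contained in the dendriform subalgebra generated by $T$, hence $\Prim(A) \subseteq \mathrm{Im}(\phi)$. Then \cref{left_prod} writes every element of $A$ as a sum of iterated left $\prec$-products of primitives, giving $A = \mathrm{Im}(\phi)$.

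Second, I would match Hilbert series. Combining \cref{left_prod} (which yields $\SP = \SA/(1+\SA)$) with \cref{brace} (which yields $\SP = \ST(1+\SA)$) gives $\ST = \SA/(1+\SA)^2$. The same functional equation holds for $D(T)$ in place of $A$: from the Loday--Ronco description of $D(V)$ by planar binary trees with internal nodes decorated by homogeneous elements of $V$, its Hilbert series $\mathcal{D}$ satisfies $\mathcal{V} = \mathcal{D}/(1+\mathcal{D})^2$. This equation has a unique power series solution for $\mathcal{D}$ in terms of $\mathcal{V}$, so taking $\mathcal{V} = \ST$ we conclude $\dim D(T)_n = \dim A_n$ for every $n$. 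Together with the surjectivity of the graded map $\phi$, this forces $\phi$ to be an isomorphism.

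The main obstacle is the surjectivity step: one must check carefully that the brace subalgebra generated by $T$ actually sits inside the dendriform subalgebra generated by $T$. The bracket formula in \cref{brace} makes this transparent at one level, but iterated braces feed brace expressions into further brackets, so a short induction on brace-depth is needed. Once that is in place, the Hilbert series matching requires only algebraic manipulation of the series identities already recorded in \cref{left_prod} and \cref{brace}, and the theorem follows with no further input.
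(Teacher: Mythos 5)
Your proposal is correct and follows exactly the route the paper indicates (it only sketches the argument in one line before the statement, deferring to Foissy): surjectivity of the canonical map from the free dendriform algebra on $\TPrim(A)$ via \cref{brace} and \cref{left_prod}, closed by the dimension count coming from $\ST = \SA/(1+\SA)^2$ and the matching Catalan-type functional equation for the free dendriform algebra. The details you flag (the induction on brace-depth for surjectivity, the invertibility of $\mathcal{D}\mapsto\mathcal{D}/(1+\mathcal{D})^2$ on power series without constant term) are exactly the right points to check, and they go through.
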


\subsection{The Hopf algebra of word-quasisymmetric functions~\texorpdfstring{$\WQSym$}{WQSym}}

The algebra $\WQSym$ is a Hopf algebra whose bases are indexed by
ordered set partitions or equivalently surjections or packed words. In
this paper, we use the latter which we define now.
\begin{defi} \label{packed_word} A word over the alphabet~$\NN_{>0}$ is
  \emph{packed} if all the letters from $1$ to its maximum $m$ appears at least
  once. By convention, the empty word~$\epsilon$ is packed.  For $n\in\NN$, we denote by
  $\PW_n$ the set of all packed words of length (also called size) $n$ and
  $\PW = \bigsqcup_{n\in\NN}\PW_n$ the set of all packed words.
\end{defi}

\begin{defi} The packed word~$u\eqdef pack(w)$ associated with a word over the
  alphabet~$\NN_{>0}$ is obtained by the following process: if
  $b_1< b_2< \dots < b_r$ are the letters occurring in $w$, then $u$ is the
  image of $w$ by the homomorphism $b_i\mapsto i$.
\end{defi}
A word $u$ is packed if and only if $pack(u) = u$.
\begin{ex}
  The word $4152142$ is not packed because the letter $3$ does not appear while
  the maximum letter is $5 > 3$. Meanwhile $pack(4152142) = 3142132$ is a packed
  word. Here are all packed words of size $1$, $2$ and $3$ in lexicographic
  order:
  \[
    1,\quad 11\ 12\ 21,\quad
    111\ 112\ 121\ 122\ 123\ 132\ 211\ 212\ 213\ 221\ 231\ 312\ 321
  \]
\end{ex}
We will use the following notations and operations on words over the alphabet
$\NN_{>0}$: First, $\max(w)$ is the maximum letter of the word $w$ with the
convention that $\max(\epsilon)=0$. Then $|w|$ is the length (or size) of the word
$w$. The concatenation of the two words $u$ and $v$ is denoted as $u\cdot v$.
Moreover, $u\gcdot v$ (resp. $u\dcdot v$) is the left-shifted
(resp. right-shifted) concatenation of the two words where all the letters of
the left (resp. right) word are shifted by the maximum of the right (resp. left)
word: $1121\gcdot 3112 = 44543112$ and $1121\dcdot 3112 = 11215334$.

Finally $u\shuffle v$ is the shuffle product of the two words. It is
recursively defined as $u \shuffle \epsilon = \epsilon \shuffle u = u$ and
\begin{equation} \label{eq:shuffle}
  ua \shuffle vb = (u \shuffle vb)\cdot a + (ua \shuffle v)\cdot b
\end{equation}
where $u$ and $v$ are words and $a$ and $b$ are letters. Analogously to the
shifted concatenation, one can define the right shifted-shuffle $u\cshuffle v$
where all the letters of the right word $v$ are shifted by the maximum of the
left word $u$.

\begin{ex}
  $12\cshuffle \red{1}\red{1} =
  12\shuffle \red{3}\red{3} =
  12\red{3}\red{3} + 1\red{3}2\red{3} +
  1\red{3}\red{3}2 + \red{3}12\red{3} + \red{3}1\red{3}2 + \red{3}\red{3}12$.
\end{ex}

\paragraph*{Bidendriform bialgebra structure}

Novelli-Thibon~\cite{novthi_2006} proved that $\WQSym$ is a bidendriform
bialgebra. Their products and coproducts involve overlapping-shuffle. However it
will be easier for us to chose, among the various bases known in the literature
\cite{Hivert_thesis,berzag,novthi_2006,Vargas_thesis} a basis where the shuffle
are non-overlapping. Therefore, we take the dual basis denoted
$(\RR_w)_{w\in\PW}$ of \cite[Equation~23]{berzag}, using the classical bijection
between ordered partitions and packed words and redefine the bidendriform
structure. The Hopf algebra product and reduced coproduct are respectively
recovered as the sum of the half products (see~\cref{half_prod}) and half
coproducts (see~\cref{left_coprod,right_coprod}).

The recursive definition of the shuffle product (see \cref{eq:shuffle}) contains two
summands. We define them respectively as $\prec$ and $\succ$:
\begin{equation}
  ua \prec vb \eqdef (u \shuffle vb)\cdot a,
  \qquad\text{and}\qquad
  ua \succ vb \eqdef (ua \shuffle v)\cdot b.
\end{equation}
We define $\prec$, $\succ$, $\Delta_\prec$ and $\Delta_\succ$ on
$(\WQSym)_+ = Vect(\RR_u \mid u \in \PW_n, n \geq 1)$ in the following way:
for all $u = u_1\cdots u_n\in \PW_{n\geq1}$ and $v \in \PW_{m\geq1}$,
\begin{equation} \label{half_prod}
  \RR_u \prec \RR_v \eqdef \sum_{w \in u \prec v} \RR_w,
  \qquad\text{and}\qquad
  \RR_u \succ \RR_v \eqdef \sum_{w \in u \succ v} \RR_w.
\end{equation}
\begin{align} 
  \label{left_coprod}  \Delta_\prec(\RR_u) &\eqdef \sum_{\stackrel{i=k}{\stackrel{\{u_1, \dots, u_i\} \cap \{u_{i+1}, \dots, u_n\} = \emptyset}{u_k = \max(u)}}}^{n-1}
                                    \RR_{pack(u_1\cdots u_i)} \otimes \RR_{pack(u_{i+1}\cdots u_n)},\\
  \label{right_coprod}  \Delta_\succ(\RR_u) &\eqdef \sum_{\stackrel{i=1}{\stackrel{\{u_1, \dots, u_i\} \cap \{u_{i+1}, \dots, u_n\} = \emptyset}{u_k = \max(u)}}}^{k-1}
               \RR_{pack(u_1\cdots u_i)} \otimes \RR_{pack(u_{i+1}\cdots u_n)}.
\end{align}
\begin{ex}
  \begin{align*}
    \RR_{211} \prec \RR_{\red{12}} &= \RR_{21\red{3}\red{4}1} + \RR_{2\red{3}1\red{4}1} + \RR_{2\red{3}\red{4}11} + 
                                 \RR_{\red{3}21\red{4}1} + \RR_{\red{3}2\red{4}11} + \RR_{\red{3}\red{4}211},\\
    \RR_{221} \succ \RR_{\red{12}} &= \RR_{211\red{3}\red{4}} + \RR_{21\red{3}1\red{4}} + \RR_{2\red{3}11\red{4}}
                                 + \RR_{\red{3}211\red{4}},\\
    \Delta_\prec(\RR_{2125334}) &= \RR_{2123} \otimes \RR_{112} + \RR_{212433} \otimes \RR_{1},\\
    \Delta_\succ(\RR_{2125334}) &= \RR_{212} \otimes \RR_{3112}. 
  \end{align*}
\end{ex}

\begin{theorem} {\cite[{Theorem~2.5}]{novthi_2006}}
  $(\WQSym)_+, \prec, \succ, \Delta_\prec, \Delta_\succ)$ is a bidendriform bialgebra.
\end{theorem}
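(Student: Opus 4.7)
The statement is attributed to~\cite[Theorem~2.5]{novthi_2006}; the only subtlety is that the formulas~\cref{half_prod,left_coprod,right_coprod} are given here in the basis~$\RR_w$ dual to the one of~\cite{berzag}, whereas Novelli--Thibon worked with overlapping shuffles in the monomial basis. The plan is therefore either to invoke their theorem after checking that our formulas match theirs under duality (the bidendriform axioms are self-dual under exchanging products and coproducts), or to re-verify the ten axioms directly on the basis. Both routes are short, and I will sketch the direct one.

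For the three dendriform axioms, associativity of the shuffle does all the work. Writing $u = u' \alpha$, $v = v' \beta$, $w = w' \gamma$ for the decomposition of each packed word into prefix and last letter, unfolding~\cref{half_prod} shows that both sides of~\cref{E1} reduce to $(u' \shuffle v \shuffle w) \cdot \alpha$, both sides of~\cref{E2} to $(u \shuffle v' \shuffle w) \cdot \beta$, and both sides of~\cref{E3} to $(u \shuffle v \shuffle w') \cdot \gamma$. Dually, the three co-dendriform axioms reduce to the observation that in any clean split of a packed word (one whose two parts have disjoint letter supports) the unique maximum letter belongs to exactly one of the two parts; iterating and bookkeeping which component of a triple clean split contains the overall maximum gives the three coassociativity-type relations for $\Delta_\prec, \Delta_\succ$.

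The main obstacle is the four compatibility axioms of~\cite{foissy_2007}, which mix the algebra and coalgebra structures. The key combinatorial fact is the following: clean splits of a word appearing in the non-overlapping shuffle~$u \shuffle v$ are in natural bijection with pairs of clean splits of~$u$ and of~$v$, and the position of the overall maximum of the shuffled word is determined by the positions of $\max(u)$ and $\max(v)$ relative to their respective cuts. Classifying this bijection according to (i) the origin of the last letter of the shuffled word (from $u$ or from $v$) and (ii) whether $\max(u)$ and $\max(v)$ each lie to the left or to the right of its cut produces precisely the four terms required by the compatibility axioms. Although mechanical, this case analysis is the only step that simultaneously involves both structures and is therefore the heart of the argument; it is also exactly what the duality with the monomial basis converts into the overlapping-shuffle calculation already carried out in~\cite{novthi_2006}, providing an independent check.
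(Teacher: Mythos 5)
The paper offers no proof of this statement at all: it is quoted as \cite[Theorem~2.5]{novthi_2006}, so there is nothing internal to compare your argument against. Your proposal is therefore a genuinely different (more self-contained) route, and its outline is sound. The citation route does require the compatibility check you mention, since the paper \emph{redefines} the structure on the dual basis $\RR_w$ via \cref{half_prod,left_coprod,right_coprod} rather than transporting Novelli--Thibon's formulas, and the bidendriform axiom system is indeed stable under duality. Your direct verification of \cref{E1,E2,E3} via associativity of the shuffle is correct, with one caveat you gloss over: the products in \cref{half_prod} use the \emph{shifted} shuffle $u\cshuffle v$, so the identity $(u'\shuffle v\shuffle w)\cdot\alpha$ must be read with $v$ shifted by $\max(u)$ and $w$ by $\max(u)+\max(v)$; one checks that the two sides of each axiom apply the same total shifts, which they do. The same shifting is in fact what makes your ``key combinatorial fact'' for the four compatibility axioms work: after shifting, the letters coming from $u$ and from $v$ occupy the disjoint intervals $[1,\max(u)]$ and $[\max(u)+1,\max(u)+\max(v)]$, so a split of $w\in u\cshuffle v$ is clean exactly when both induced splits of $u$ and of $v$ are clean (allowing empty parts), and the occurrences of $\max(w)$ are exactly the shifted occurrences of $\max(v)$ --- so only the position of $\max(v)$ relative to its cut decides between $\Delta_\prec$ and $\Delta_\succ$ of the product, not that of $\max(u)$ as your phrasing suggests. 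The remaining case analysis (origin of the last letter, emptiness of the four induced parts, and which half-coproduct governs each tensor factor) is mechanical but is the entire content of the theorem, and your sketch stops short of carrying it out; as a complete proof it would need to be written against Foissy's four relations term by term. As a justification for a cited result, however, the level of detail is appropriate, and the approach would succeed.
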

From now on $\Prim(\WQSym)$ and $\TPrim(\WQSym)$ are respectively abbreviated
to $\Prim$ and $\TPrim$. Moreover, we denote homogeneous components using
indices as in $\Prim_n$.
We give the first values of the dimensions~$a_n\eqdef\dim(\WQSym_n)$, $p_n\eqdef\dim(\Prim_n)$ and
$t_n\eqdef\dim(\TPrim_n)$:
\[
\begin{tabular}{|c|c|c|c|c|c|c|c|c|c||l|}
  \hline
  $n$ & 1 & 2 & 3 & 4 & 5 & 6 & 7 & 8 & 9 & OEIS \\
  \hline
  $a_n$ & 1 & 3 & 13 & 75 & 541 & 4 683 & 47 293 & 545 835 & 7 087 261 & A000670 \\
  \hline
  $p_n$ & 1 & 2 & 8 & 48 & 368 & 3 376 & 35 824 & 430 512 & 5 773 936 & A095989 \\
  \hline
  $t_n$ & 1 & 1 & 4 & 28 & 240 & 2 384 & 26 832 & 337 168 & 4 680 272 &  \\
  \hline
\end{tabular} 
\]


\section{Decorated forests}\label{sect2_pw_trees}

In this section we will generalize the construction of \cite{foissy_2011}. We
will start by defining some forests called biplane that are labeled by certain
lists of integers. We will construct a bijection between these packed forests
and packed words thanks to a decomposition of packed words through global
descents and removal of maximums. The recursive structure of forests can then be
understood as a chaining of operations generating the elements of
$\WQSym$. This will allow us to construct a basis of $\TPrim$ by characterizing
a subfamily of biplane trees.

\subsection{Decompositions of packed words}

\begin{defi} \label{global_descent} A \textbf{global descent} of a packed word
  $w$ is a position $c$ such that all the letters before or at position $c$ are
  greater than all letters after position $c$.
\end{defi}

\begin{ex}
  The global descents of $w = 54664312$ are the positions 5 and 6. Indeed, all letters of $54664$
  are greater than the letters of $312$ and this is also true for $546643$ and $12$.
\end{ex}

\begin{defi} \label{irreducible} A packed word $w$ is \textbf{irreducible} if it has no global
  descent.
\end{defi}

\begin{lem} \label{gd_fact} Each word~$w$ admits a unique factorization as
  $w = w_1\gcdot w_2\gcdot\dots\gcdot w_k$ such that $w_i$ is irreducible for all $i$.
\end{lem}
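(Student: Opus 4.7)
The plan is to prove existence and uniqueness simultaneously by showing that the cut positions of any valid factorization are forced to coincide with the set of global descents of $w$. First I would observe that the operation $\gcdot$ is associative (both sides of $(u \gcdot v) \gcdot w = u \gcdot (v \gcdot w)$ simplify to the concatenation of $u$ shifted by $\max(v)+\max(w)$, then $v$ shifted by $\max(w)$, then $w$), so the $k$-fold product makes unambiguous sense. I would then note a key structural fact: for any factorization $w = w_1 \gcdot w_2 \gcdot \dots \gcdot w_k$, each partial position $c_j \eqdef |w_1| + \dots + |w_j|$ (for $1 \leq j \leq k-1$) is necessarily a global descent of $w$, because the letters of $w_1 \gcdot \dots \gcdot w_j$ are all shifted by $\max(w_{j+1} \gcdot \dots \gcdot w_k)$ and are therefore strictly larger than every letter appearing after position $c_j$.

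For existence, let $c_1 < c_2 < \dots < c_{k-1}$ be the global descents of $w$, and set $c_0 = 0$, $c_k = |w|$. Define $w_j \eqdef pack(w_{c_{j-1}+1} \cdots w_{c_j})$. Since $w$ is packed and each $c_j$ is a global descent, the letter sets of the consecutive blocks are disjoint intervals whose union is $\{1, \dots, \max(w)\}$: more precisely, if $m_j$ denotes the maximum of the suffix $w_{c_j+1}\cdots w_n$, then the block at position $(c_{j-1}, c_j]$ uses exactly the letters $\{m_j+1, \dots, m_{j-1}\}$. Packing this block and then applying $\gcdot$ (which re-shifts by the max of everything to its right, namely $m_j$) recovers the original letters exactly, so $w = w_1 \gcdot w_2 \gcdot \dots \gcdot w_k$. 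Each $w_j$ is irreducible because any global descent interior to $w_j$ would transport to a global descent of $w$ lying strictly between $c_{j-1}$ and $c_j$, contradicting the choice of the $c_i$'s as \emph{all} the global descents.

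For uniqueness, suppose $w = w'_1 \gcdot \dots \gcdot w'_\ell$ is any factorization into irreducible packed words. By the structural fact above, the $\ell-1$ cut positions $|w'_1| + \dots + |w'_j|$ are global descents of $w$, so they form a subset of $\{c_1, \dots, c_{k-1}\}$. Conversely, if some $c_i$ were not a cut position, it would lie strictly inside some factor $w'_j$ and, after un-shifting, would produce a global descent of $w'_j$, contradicting its irreducibility. Hence the cut positions coincide with $\{c_1, \dots, c_{k-1}\}$, so $\ell = k$ and $w'_j = w_j$ for all $j$. The only delicate point is keeping track of the shift conventions so that a global descent of $w$ inside a block does indeed pull back to a global descent of the packed block; this is straightforward since packing is order-preserving and the block's letters form a contiguous interval.
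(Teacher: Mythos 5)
Your proof is correct: the paper states this lemma without giving any proof, and your argument (showing that the cut positions of any factorization into non-empty irreducibles must coincide exactly with the set of global descents, which yields existence and uniqueness simultaneously) is precisely the standard justification one would supply, including the needed observation that a global descent splits a packed word into blocks whose letter sets are contiguous intervals. The only implicit hypotheses worth flagging are that $w$ is packed and that the factors $w_i$ are non-empty packed words, both of which the paper clearly intends.
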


\begin{ex}
  The global descent decomposition of $54664312$ is $21331\gcdot 1\gcdot 12$.
  The word $n\cdot n-1\cdot...\cdot 1$ has $1\gcdot 1\gcdot ...\gcdot 1$ as global descent decomposition.
\end{ex}

\begin{defi} \label{phi} Fix $n \in \NN$ and $w \in \PW_n$. We write $m \eqdef \max(w) + 1$. For any
  $p>0$ and any subset~$I \subseteq [1,\dots, n+p]$ of cardinality~$p$,
  $\phi_I(w)=u_1\dots u_{n+p}$ is the packed word of length $n+p$ obtained by inserting $p$
  occurences of the letter $m$ in $w$ so that they end up in positions $i \in I$. In other words
  $u_i = m$ if $i\in I$ and $w$ is obtained from $\phi_I(w)$ by removing all occurrences of $m$.
\end{defi}

\begin{ex} $\phi_{2,4,7}(1232) = 1424324$ and $\phi_{1,2,3}(\epsilon)=111$.
\end{ex}

Let $\PW_n^I$ denote the set of packed words of size $n$ whose maximums are in positions
$i\in I$. This way $\phi_I(w)\in \PW_{n+p}^I$. The following lemma is immediate.

\begin{lem} \label{phi_bij} Let $n \in \NN$ and $p>0$, for any $I \subseteq [1,\dots, n+p]$,
  $\phi_I$ is a bijection from~$\PW_{n}$ to the~$\PW_{n+p}^I$.
  
  Moreover, for any $W\in\PW_\ell$ where~$\ell>0$ there exists a unique pair
  $(I,w)$ where $I\subseteq [1\dots \ell]$ and $w$ is packed, such that $W = \phi_I(w)$.
\end{lem}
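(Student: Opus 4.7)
The plan is to construct an explicit inverse $\psi_I : \PW_{n+p}^I \to \PW_n$ and verify both claims from it. The natural candidate is the map that deletes the letters at the positions in $I$; in the target set $\PW_{n+p}^I$ these positions are exactly the occurrences of the maximum letter.

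First I would check that $\phi_I$ really lands in $\PW_{n+p}^I$. Set $m \eqdef \max(w)+1$. The $p$ inserted copies of $m$ are strictly greater than any letter of $w$, so $m = \max(\phi_I(w))$ and its positions are exactly $I$. Since $w$ is packed, each letter of $\{1,\dots,m-1\}$ already appears in $\phi_I(w)$, and $p > 0$ ensures $m$ appears as well; hence $\phi_I(w) \in \PW_{n+p}^I$.

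Symmetrically, for $\psi_I$: take $W \in \PW_{n+p}^I$ with $\max(W) = m$. By definition of $\PW_{n+p}^I$, the positions of $m$ in $W$ are exactly $I$, so $\psi_I(W)$ has length $n$ and its letters form the set $\{1,\dots,m-1\}$, each appearing at least once since $W$ is packed. Hence $\psi_I(W) \in \PW_n$. A direct inspection shows that both compositions $\psi_I \circ \phi_I$ and $\phi_I \circ \psi_I$ are the identity, since the positions in $I$ are precisely those being inserted or deleted while the other positions are left unchanged.

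For the second statement, given $W \in \PW_\ell$ with $\ell > 0$, let $I$ be the set of positions of $\max(W)$ in $W$ and set $w \eqdef \psi_I(W)$; then $|I| > 0$ and $W = \phi_I(w)$ by the first part. Uniqueness is automatic: in any decomposition $W = \phi_{I'}(w')$ the letter $\max(w')+1$ must equal $\max(W)$, so $I'$ must be the set of positions of $\max(W)$ in $W$, and $w'$ is then forced to be $\psi_{I'}(W)$. The only delicate point in the whole argument is the bookkeeping around packedness when the maximum letter is removed, which is handled by the observation that $W$ packed with maximum $m$ forces $\{1,\dots,m-1\}$ to occur in the remaining $n$ positions.
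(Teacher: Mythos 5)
Your proof is correct: the paper itself offers no argument, simply declaring the lemma immediate after introducing $\PW_{n+p}^I$, and your explicit inverse (deleting the occurrences of the maximum letter, which are exactly the positions in $I$) together with the packedness bookkeeping is precisely the intended verification. Nothing is missing.
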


\begin{defi} \label{ins} Let $u, v \in \PW$ with $v \neq \epsilon$. By~\cref{phi_bij}, there
  is a unique pair $(I, v')$ such that $v=\phi_I(v')$. We denote by $I'$ the set
  obtained by adding $|u|$ to the elements of~$I$. We define
  $u\ins v = \phi_{I'}(u\gcdot v')$. In other words, we remove the maximum letter
  of the right word, perform a left shifted concatenation and reinsert the
  removed letters as new maximums.
\end{defi}

\begin{ex}
  $2123\ins \red{3}\blue{22}\red{3}\blue{12} = \phi_{1+4, 4+4}(4345\blue{2212}) =
  4345\red{6}\blue{22}\red{6}\blue{12}$.
\end{ex}

\begin{lem} \label{phi_fact} Let~$w$ be an irreducible packed word. There exists
  a unique factorization of the form $w=u\ins\phi_I(v)$ which maximizes the size of
  $u$. In this factorization
  \begin{itemize}
  \item either $v=\epsilon$ and $I = [1, \dots, p]$ for some $p$;
  \item or the factorization $v=v_1\gcdot\dots\gcdot v_r$ of $v$ into
    irreducibles satisfies the inequalities $i_1 \leq |v_1|$, and
    $(\sum|v_i| + |I|) + 1 - i_p \leq |v_r|$.
  \end{itemize}
\end{lem}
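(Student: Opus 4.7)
The plan is to prove three things in order: existence of at least one factorization, uniqueness for fixed $|u|$ (hence uniqueness of the maximal one), and the two structural conditions. For existence, setting $u = \epsilon$, letting $I$ be the set of positions of $\max(w)$ in $w$, and taking $v$ to be the packed word obtained from $w$ by removing all its maximum letters yields $w = \epsilon \ins \phi_I(v)$ by~\cref{phi_bij}. Since $|u| \leq |w|$ for any factorization, a maximizer of $|u|$ exists.

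For uniqueness given $|u|$, I would unroll~\cref{ins} to obtain $w = \phi_{I+|u|}(u \gcdot v)$: thus $I + |u|$ must be the set of positions of $\max(w)$ in $w$, which determines $I$; the first $|u|$ letters of $w$ are then non-max and coincide with $u$ shifted by $\max(v)$, so packing them recovers~$u$; the remaining non-max letters of $w$ form~$v$. This gives uniqueness of the maximal factorization.

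The two conditions are then proved by contradiction. If $v = \epsilon$, the subset $I$ of $[1, \dots, |I|]$ has cardinality $|I|$, forcing $I = [1, \dots, p]$. Assume now $v \neq \epsilon$. If $i_1 > |v_1|$, the first $|v_1|$ letters of $\phi_I(v)$ are non-max and constitute the first factor $v_1$ of $v$; by associativity of $\gcdot$ one has $u \gcdot v = (u \gcdot v_1) \gcdot (v_2 \gcdot \dots \gcdot v_r)$, producing a new factorization with $u' \eqdef u \gcdot v_1$, $v' \eqdef v_2 \gcdot \dots \gcdot v_r$, $I' \eqdef I - |v_1|$ and $|u'| = |u| + |v_1| > |u|$, contradicting maximality. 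If instead $i_p < |\phi_I(v)| + 1 - |v_r|$, the last $|v_r|$ letters of $w$ are exactly $v_r$, and every letter of $w$ occurring earlier is strictly larger: the $u$-letters by the outer $\gcdot$, the $v_i$-letters for $i < r$ by the global descent factorization of $v$, and the letters equal to $\max(w)$ trivially. Hence position $|w| - |v_r|$ is a global descent of $w$, contradicting irreducibility.

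The main obstacle is the absorption step: one must track shifts in $\gcdot$ carefully to check the identity $(u \gcdot v_1) \gcdot (v_2 \gcdot \dots \gcdot v_r) = u \gcdot v$ and that $I - |v_1|$ is a valid subset of $[1, \dots, |v'| + |I|]$, both of which follow from $i_1 > |v_1|$.
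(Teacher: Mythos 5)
The paper states \cref{phi_fact} without proof (it is an extended abstract), so there is no argument of the author's to compare against; judged on its own, your proof is correct and is the natural one: existence via $u=\epsilon$, uniqueness by unrolling $\ins$ to $w=\phi_{I+|u|}(u\gcdot v)$ and invoking \cref{phi_bij}, the first inequality by absorbing $v_1$ into $u$ against maximality, and the second by exhibiting the global descent at position $|w|-|v_r|$ against irreducibility. The only nitpick is that the admissibility of $I-|v_1|$ requires both $i_1>|v_1|$ (for positivity) and $i_p\le|v|+|I|$ (for the upper bound $i_p-|v_1|\le|v'|+|I|$); the latter comes from $I$ being a valid insertion set for $v$, not from $i_1>|v_1|$ as you state.
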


\begin{ex} Here are some decompositions according to~\cref{phi_fact}:
  \begin{alignat*}{2}
    21331 &= 1\ins\phi_{2,3}(11) &
    1231 &= \epsilon \ins \phi_3(121)\\
    1233 &= 12 \ins \phi_{1,2}(\epsilon) & 
    111 &= \epsilon \ins \phi_{1,2,3}(\epsilon) \\
    543462161 &= (1 \gcdot 212) \ins \phi_{1,4}(1 \gcdot 11)
    \rlap{ $ = (3212) \ins \phi_{1,4}(211)$}
  \end{alignat*}
\end{ex}
  
\subsection{Forests from decomposed packed words}

We now apply recursively the decomposition of the former section to construct a
bijection between packed words and a certain kind of trees that we now define. 
\begin{defi} An unlabeled \textbf{biplane tree} is an ordered tree
  (sometimes also called a planar) whose children are organized in a
  pair of two (possibly empty) ordered forests, which we call the left
  and right forests.
\end{defi}

\begin{ex} The biplan trees
  $\scalebox{0.5}{{ \newcommand{\nodea}{\node[draw,circle] (a) {}
;}\newcommand{\nodeb}{\node[draw,circle] (b) {}
;}\newcommand{\nodec}{\node[draw,circle] (c) {}
;}\begin{tikzpicture}[auto]
\matrix[column sep=.3cm, row sep=.3cm,ampersand replacement=\&]{
         \&         \& \nodea  \\ 
 \nodeb  \& \nodec  \&         \\
};

\path[ultra thick, red] (a) edge (b) edge (c);
\end{tikzpicture}}}$,
  $\scalebox{0.5}{{ \newcommand{\nodea}{\node[draw,circle] (a) {}
;}\newcommand{\nodeb}{\node[draw,circle] (b) {}
;}\newcommand{\nodec}{\node[draw,circle] (c) {}
;}\begin{tikzpicture}[auto]
\matrix[column sep=.3cm, row sep=.3cm,ampersand replacement=\&]{
         \& \nodea  \&         \\ 
 \nodeb  \&         \& \nodec  \\
};

\path[ultra thick, red] (a) edge (b) edge (c);
\end{tikzpicture}}}$ and
  $\scalebox{0.5}{{ \newcommand{\nodea}{\node[draw,circle] (a) {}
;}\newcommand{\nodeb}{\node[draw,circle] (b) {}
;}\newcommand{\nodec}{\node[draw,circle] (c) {}
;}\begin{tikzpicture}[auto]
\matrix[column sep=.3cm, row sep=.3cm,ampersand replacement=\&]{
 \nodea  \&         \&         \\ 
         \& \nodeb  \& \nodec  \\
};

\path[ultra thick, red] (a) edge (b) edge (c);
\end{tikzpicture}}}$ are different.
  Indeed in the first case, the left forest contains two trees and the right
  forest is empty, in the second case both forests contain exactly one tree
  while in the third case we have the opposite of the first case. Here is an example
  of a bigger biplane tree where the root has two trees in both left and right
  forests \scalebox{0.5}{{ \newcommand{\nodea}{\node[draw,circle] (a) {}
;}\newcommand{\nodeb}{\node[draw,circle] (b) {}
;}\newcommand{\nodec}{\node[draw,circle] (c) {}
;}\newcommand{\noded}{\node[draw,circle] (d) {}
;}\newcommand{\nodee}{\node[draw,circle] (e) {}
;}\newcommand{\nodef}{\node[draw,circle] (f) {}
;}\newcommand{\nodeg}{\node[draw,circle] (g) {}
;}\newcommand{\nodeh}{\node[draw,circle] (h) {}
;}\newcommand{\nodei}{\node[draw,circle] (i) {}
;}\newcommand{\nodej}{\node[draw,circle] (j) {}
;}\begin{tikzpicture}[auto]
\matrix[column sep=.3cm, row sep=.3cm,ampersand replacement=\&]{
         \&         \&         \&         \&         \&         \&         \& \nodea  \&         \&         \&         \\ 
         \& \nodeb  \&         \&         \& \nodee  \&         \&         \&         \&         \& \nodeh  \& \nodej  \\ 
 \nodec  \&         \& \noded  \&         \&         \& \nodef  \& \nodeg  \&         \& \nodei  \&         \&         \\
};

\path[ultra thick, red] (b) edge (c) edge (d)
	(e) edge (f) edge (g)
	(h) edge (i)
	(a) edge (b) edge (e) edge (h) edge (j);
\end{tikzpicture}}}.
\end{ex}

\begin{rem}
  These biplan trees are counted by the sequence A006013 in OEIS.
\end{rem}

In our construction we will deal with labeled biplane trees where the
labels are sorted lists of positive integer. For a labeled biplane
tree, we denote by $\Node(x, f_\ell, f_r)$ the tree whose root is labeled
by $x$ and whose left (resp. right) forest is given by $f_\ell$
(resp. $f_r$). We also denote by~$[t_1, \dots t_k]$ a forest of $k$
trees.

\begin{ex}
  $\Node((1),\ [],[]) =$ \scalebox{0.5}{{ \newcommand{\nodea}{\node[draw,circle] (a) {$1$}
;}\begin{tikzpicture}[auto]
\matrix[column sep=.3cm, row sep=.3cm,ampersand replacement=\&]{
 \nodea  \\
};
\end{tikzpicture}}}, and
  $\Node((1,3),\ [],[\Node((1),\ [],[])]) =$ \scalebox{0.5}{{ \newcommand{\nodea}{\node[draw,circle] (a) {$1, 3$}
;}\newcommand{\nodeb}{\node[draw,circle] (b) {$1$}
;}\begin{tikzpicture}[auto]
\matrix[column sep=.3cm, row sep=.3cm,ampersand replacement=\&]{
         \& \nodea  \&         \\ 
         \&         \& \nodeb  \\
};

\path[ultra thick, red] (a) edge (b);
\end{tikzpicture}}}.
\end{ex}

We now apply recursively the decompositions of~\cref{gd_fact,phi_fact} to get an algorithm which
takes a packed word and returns a biplane forest where nodes are decorated by lists of integers:
\begin{defi} \label{def:construction}
The forest~$F(w)$~(resp. tree~$T(w)$) associated to a packed word (resp. a non empty
    irreducible packed word) $w$ are defined in a mutual recursive way as follows:
  \begin{itemize}
  \item $F(\epsilon) = []$ (empty forest),
  \item for any packed word $w$, let
    $w_1\gcdot w_2\gcdot\dots\gcdot w_k$ be the global descent decomposition
    of $w$, then $F(w) \eqdef [T(w_1), T(w_2), \dots, T(w_k)]$.
  \item for any non empty irreducible packed word $w$, we define
    $T(w) \eqdef \Node(I, F(u), F(v))$ where $w=u\ins\phi_I(v)$ and $u$ is of
    maximal length.
  \end{itemize}
\end{defi}

\begin{ex}
  Let $w = 876795343912$, the decomposition of~\cref{gd_fact} gives $w = w_1\gcdot w_2$ with
  $w_1 = 6545731217$ and $w_2 = 12$. Now, we decompose $w_1$ and $w_2$
  using~\cref{phi_fact} as
   \[
     w_1 = 3212 \ins \phi_{1,6}( 3121 )= (1 \gcdot 212) \ins \phi_{1,6} (1
     \gcdot 121),
     \quad\text{and}\quad
     w_2 = 1 \ins \phi_1(\epsilon).
   \]
   It gives the following forest:
  \begin{align*}
    F(876795343912) & = [T(6545731217),\ T(12)] \\
                    & =
                      \scalebox{0.6}{{ \newcommand{\nodea}{\node[draw,circle] (a) {$1, 6$}
;}\newcommand{\nodeb}{\node (b) {$F(3212)$}
;}\newcommand{\nodee}{\node (e) {$F(3121)$}
;}\begin{tikzpicture}[auto]
\matrix[column sep=.3cm, row sep=.3cm,ampersand replacement=\&]{
         \&         \&         \&         \& \nodea  \&        \\
 \nodeb  \&         \&         \&         \&         \& \nodee \\
};

\path[ultra thick, red] (a) edge (b) edge (e);
\end{tikzpicture}}}
                      \scalebox{0.6}{{ \newcommand{\nodea}{\node[draw,circle] (a) {$1$}
;}\newcommand{\nodeb}{\node (b) {$F(1)$}
;}\newcommand{\nodee}{\node (e) {$F(\epsilon)$}
;}\begin{tikzpicture}[auto]
\matrix[column sep=.3cm, row sep=.3cm,ampersand replacement=\&]{
         \& \nodea  \&         \\
 \nodeb  \&         \& \nodee  \\
};

\path[ultra thick, red] (a) edge (b) edge (e);
\end{tikzpicture}}
} \\ 
                    & =
                      \scalebox{0.5}{{ \newcommand{\nodea}{\node[draw,circle] (a) {$1, 6$}
;}\newcommand{\nodeb}{\node[draw,circle] (b) {$1$}
;}\newcommand{\nodec}{\node[draw,circle] (c) {$1, 3$}
;}\newcommand{\noded}{\node[draw,circle] (d) {$1$}
;}\newcommand{\nodee}{\node[draw,circle] (e) {$1$}
;}\newcommand{\nodef}{\node[draw,circle] (f) {$2$}
;}\newcommand{\nodeg}{\node[draw,circle] (g) {$1, 2$}
;}\begin{tikzpicture}[auto]
\matrix[column sep=.3cm, row sep=.3cm,ampersand replacement=\&]{
         \&         \&         \&         \& \nodea  \&         \&         \&         \&         \\ 
 \nodeb  \&         \& \nodec  \&         \&         \& \nodee  \&         \& \nodef  \&         \\ 
         \&         \&         \& \noded  \&         \&         \&         \&         \& \nodeg  \\
};

\path[ultra thick, red] (c) edge (d)
	(f) edge (g)
	(a) edge (b) edge (c) edge (e) edge (f);
\end{tikzpicture}}
{  \newcommand{\nodea}{\node[draw,circle] (a) {$1$}
;}\newcommand{\nodeb}{\node[draw,circle] (b) {$1$}
;}\begin{tikzpicture}[auto]
\matrix[column sep=.3cm, row sep=.3cm,ampersand replacement=\&]{
         \& \nodea  \\ 
 \nodeb  \&         \\
};

\path[ultra thick, red] (a) edge (b);
\end{tikzpicture}}}
                      =
                      \scalebox{0.5}{{ \newcommand{\nodea}{\node[draw,circle] (a) {$1, 6$}
;}\newcommand{\nodeb}{\node[draw,circle] (b) {}
;}\newcommand{\nodec}{\node[draw,circle] (c) {$1, 3$}
;}\newcommand{\noded}{\node[draw,circle] (d) {}
;}\newcommand{\nodee}{\node[draw,circle] (e) {}
;}\newcommand{\nodef}{\node[draw,circle] (f) {$2$}
;}\newcommand{\nodeg}{\node[draw,circle] (g) {$1, 2$}
;}\begin{tikzpicture}[auto]
\matrix[column sep=.3cm, row sep=.3cm,ampersand replacement=\&]{
         \&         \&         \&         \& \nodea  \&         \&         \&         \&         \\ 
 \nodeb  \&         \& \nodec  \&         \&         \& \nodee  \&         \& \nodef  \&         \\ 
         \&         \&         \& \noded  \&         \&         \&         \&         \& \nodeg  \\
};

\path[ultra thick, red] (c) edge (d)
	(f) edge (g)
	(a) edge (b) edge (c) edge (e) edge (f);
\end{tikzpicture}}
{  \newcommand{\nodea}{\node[draw,circle] (a) {}
;}\newcommand{\nodeb}{\node[draw,circle] (b) {}
;}\begin{tikzpicture}[auto]
\matrix[column sep=.3cm, row sep=.3cm,ampersand replacement=\&]{
         \& \nodea  \\ 
 \nodeb  \&         \\
};

\path[ultra thick, red] (a) edge (b);
\end{tikzpicture}}}.
  \end{align*}
  
  As we can see in this example, in order to simplify the notation, we do not
  write the label when it is the list $1$.
  We now characterize the trees obtained this way:
\end{ex}

\begin{defi} \label{forest_def} Let $t$ be a labeled biplane tree. We
  write $t = \Node(I, f_\ell, f_r)$ where ${I = [i_1, \ldots, i_p]}$,
  $f_\ell = [\ell_1, \ldots, \ell_g]$ and $f_r = [r_1, \ldots, r_d]$, which is depicted as follows:
  \begin{equation*}
  	t = \scalebox{0.8}{{ \newcommand{\nodea}{\node[draw,circle] (a) {$I$}
;}\newcommand{\nodeb}{\node (b) {$\ell_1$}
;}\newcommand{\nodebc}{\node (bc) {$\ldots$}
;}\newcommand{\nodec}{\node (c) {$\ell_g$}
;}\newcommand{\nodee}{\node (e) {$r_1$}
;}\newcommand{\nodeef}{\node (ef) {$\ldots$}
;}\newcommand{\nodef}{\node (f) {$r_d$}
;}\begin{tikzpicture}[auto]
\matrix[column sep=.3cm, row sep=.3cm,ampersand replacement=\&]{
      \&  \&  \& \nodea  \& \&    \&    \\ 
 \nodeb  \& \nodebc \& \nodec \&  \& \nodee \& \nodeef \& \nodef  \\
};

\path[ultra thick, red] (a) edge (b) edge (c) edge (e) edge (f);

\end{tikzpicture}}}.
  \end{equation*}
  
  The \emph{weight} of $t$ is recursively defined by
  $\omega(t) = p + \sum_{i=0}^{g}\omega(\ell_i) + \sum_{j=0}^{d}\omega(r_j)$. In particular, if
  $t$ is a leaf then $\omega(t) = p$. The \emph{right-weight} of $t$ is defined by
  ${\omega_r(t) = p + \sum_{j=0}^{d}\omega(r_j)}$. We say that $t$ is a \textbf{packed tree}
  if it satisfies:
  \begin{itemize}
  \item If there are no right children ($d=0$) then $i_k = k$ for all $k$.
  \Item Otherwise:
    \begin{equation} \label{eq:forest_def}
      \left\{
        \begin{array}{rcl}
          1 \leq &i_1 &\leq \omega(r_1),\\
          1 \leq &\omega_r(t) + 1 - i_p &\le \omega(r_d).
        \end{array}
      \right.
    \end{equation}
  \item $f_\ell$ and $f_r$ are packed forests (\emph{i.e.} lists of packed trees).
  \end{itemize}
\end{defi}

For the rest of the article we denote the set of packed forests of weight $n$ by $\PForest_n$
and the set of packed trees of weight $n$ by $\PTree_n$. We also denote by $\PForest_n^I$ the
set of packed forests that satisfy~\cref{eq:forest_def} for $I = (i_1, \ldots, i_p)$ with
$0 <i_1< \ldots< i_p$. Finally we denote the set of packed trees of weight
$n$ with an empty left forest  by $\PParticular_n$.

There is a unique packed forest of weight 1, namely \scalebox{0.5}{{ \newcommand{\nodea}{\node[draw,circle] (a) {}
;}\begin{tikzpicture}[auto]
\matrix[column sep=.3cm, row sep=.3cm,ampersand replacement=\&]{
 \nodea  \\
};
\end{tikzpicture}}},
here are the packed forests of weight 2:\scalebox{0.5}{{ \newcommand{\nodea}{\node[draw,circle] (a) {}
;}\newcommand{\nodeb}{\node[draw,circle] (b) {}
;}\begin{tikzpicture}[auto]
\matrix[column sep=.3cm, row sep=.3cm,ampersand replacement=\&]{
         \& \nodea  \\ 
 \nodeb  \&         \\
};

\path[ultra thick, red] (a) edge (b);
\end{tikzpicture}}},
    \scalebox{0.5}{{ \newcommand{\nodea}{\node[draw,circle] (a) {}
;}\begin{tikzpicture}[auto]
\matrix[column sep=.3cm, row sep=.3cm,ampersand replacement=\&]{
 \nodea  \\
};
\end{tikzpicture}}
{  \newcommand{\nodea}{\node[draw,circle] (a) {}
;}\begin{tikzpicture}[auto]
\matrix[column sep=.3cm, row sep=.3cm,ampersand replacement=\&]{
 \nodea  \\
};
\end{tikzpicture}}},
    \scalebox{0.5}{{ \newcommand{\nodea}{\node[draw,circle] (a) {$1, 2$}
;}\begin{tikzpicture}[auto]
\matrix[column sep=.3cm, row sep=.3cm,ampersand replacement=\&]{
 \nodea  \\
};
\end{tikzpicture}}}.
    We show below the packed forests of weight 3:

\noindent
    \scalebox{0.4}{{ \newcommand{\nodea}{\node[draw,circle] (a) {}
;}\newcommand{\nodeb}{\node[draw,circle] (b) {}
;}\newcommand{\nodec}{\node[draw,circle] (c) {}
;}\begin{tikzpicture}[auto]
\matrix[column sep=.3cm, row sep=.3cm,ampersand replacement=\&]{
         \&         \& \nodea  \\ 
         \& \nodeb  \&         \\ 
 \nodec  \&         \&         \\
};

\path[ultra thick, red] (b) edge (c)
	(a) edge (b);
\end{tikzpicture}}},
    \scalebox{0.4}{{ \newcommand{\nodea}{\node[draw,circle] (a) {$2$}
;}\newcommand{\nodeb}{\node[draw,circle] (b) {}
;}\newcommand{\nodec}{\node[draw,circle] (c) {}
;}\begin{tikzpicture}[auto]
\matrix[column sep=.3cm, row sep=.3cm,ampersand replacement=\&]{
         \& \nodea  \&         \&         \\ 
         \&         \&         \& \nodeb  \\ 
         \&         \& \nodec  \&         \\
};

\path[ultra thick, red] (b) edge (c)
	(a) edge (b);
\end{tikzpicture}}},
    \scalebox{0.4}{{ \newcommand{\nodea}{\node[draw,circle] (a) {}
;}\newcommand{\nodeb}{\node[draw,circle] (b) {}
;}\newcommand{\nodec}{\node[draw,circle] (c) {}
;}\begin{tikzpicture}[auto]
\matrix[column sep=.3cm, row sep=.3cm,ampersand replacement=\&]{
         \&         \& \nodea  \\ 
 \nodeb  \& \nodec  \&         \\
};

\path[ultra thick, red] (a) edge (b) edge (c);
\end{tikzpicture}}},
    \scalebox{0.4}{{ \newcommand{\nodea}{\node[draw,circle] (a) {}
;}\newcommand{\nodeb}{\node[draw,circle] (b) {}
;}\begin{tikzpicture}[auto]
\matrix[column sep=.3cm, row sep=.3cm,ampersand replacement=\&]{
         \& \nodea  \\ 
 \nodeb  \&         \\
};

\path[ultra thick, red] (a) edge (b);
\end{tikzpicture}}
{  \newcommand{\nodea}{\node[draw,circle] (a) {}
;}\begin{tikzpicture}[auto]
\matrix[column sep=.3cm, row sep=.3cm,ampersand replacement=\&]{
 \nodea  \\
};
\end{tikzpicture}}},
    \scalebox{0.4}{{ \newcommand{\nodea}{\node[draw,circle] (a) {}
;}\begin{tikzpicture}[auto]
\matrix[column sep=.3cm, row sep=.3cm,ampersand replacement=\&]{
 \nodea  \\
};
\end{tikzpicture}}
{  \newcommand{\nodea}{\node[draw,circle] (a) {}
;}\newcommand{\nodeb}{\node[draw,circle] (b) {}
;}\begin{tikzpicture}[auto]
\matrix[column sep=.3cm, row sep=.3cm,ampersand replacement=\&]{
         \& \nodea  \\ 
 \nodeb  \&         \\
};

\path[ultra thick, red] (a) edge (b);
\end{tikzpicture}}},
    \scalebox{0.4}{{ \newcommand{\nodea}{\node[draw,circle] (a) {}
;}\begin{tikzpicture}[auto]
\matrix[column sep=.3cm, row sep=.3cm,ampersand replacement=\&]{
 \nodea  \\
};
\end{tikzpicture}}
{  \newcommand{\nodea}{\node[draw,circle] (a) {}
;}\begin{tikzpicture}[auto]
\matrix[column sep=.3cm, row sep=.3cm,ampersand replacement=\&]{
 \nodea  \\
};
\end{tikzpicture}}
{  \newcommand{\nodea}{\node[draw,circle] (a) {}
;}\begin{tikzpicture}[auto]
\matrix[column sep=.3cm, row sep=.3cm,ampersand replacement=\&]{
 \nodea  \\
};
\end{tikzpicture}}},
    \scalebox{0.4}{{ \newcommand{\nodea}{\node[draw,circle] (a) {$1, 2$}
;}\newcommand{\nodeb}{\node[draw,circle] (b) {}
;}\begin{tikzpicture}[auto]
\matrix[column sep=.3cm, row sep=.3cm,ampersand replacement=\&]{
         \& \nodea  \\ 
 \nodeb  \&         \\
};

\path[ultra thick, red] (a) edge (b);
\end{tikzpicture}}},
    \scalebox{0.4}{{ \newcommand{\nodea}{\node[draw,circle] (a) {$1, 3$}
;}\newcommand{\nodeb}{\node[draw,circle] (b) {}
;}\begin{tikzpicture}[auto]
\matrix[column sep=.3cm, row sep=.3cm,ampersand replacement=\&]{
         \& \nodea  \&         \\ 
         \&         \& \nodeb  \\
};

\path[ultra thick, red] (a) edge (b);
\end{tikzpicture}}},
    \scalebox{0.4}{{ \newcommand{\nodea}{\node[draw,circle] (a) {$1, 2$}
;}\begin{tikzpicture}[auto]
\matrix[column sep=.3cm, row sep=.3cm,ampersand replacement=\&]{
 \nodea  \\
};
\end{tikzpicture}}
{  \newcommand{\nodea}{\node[draw,circle] (a) {}
;}\begin{tikzpicture}[auto]
\matrix[column sep=.3cm, row sep=.3cm,ampersand replacement=\&]{
 \nodea  \\
};
\end{tikzpicture}}},
    \scalebox{0.4}{{ \newcommand{\nodea}{\node[draw,circle] (a) {}
;}\newcommand{\nodeb}{\node[draw,circle] (b) {$1, 2$}
;}\begin{tikzpicture}[auto]
\matrix[column sep=.3cm, row sep=.3cm,ampersand replacement=\&]{
         \& \nodea  \\ 
 \nodeb  \&         \\
};

\path[ultra thick, red] (a) edge (b);
\end{tikzpicture}}},
    \scalebox{0.4}{{ \newcommand{\nodea}{\node[draw,circle] (a) {$2$}
;}\newcommand{\nodeb}{\node[draw,circle] (b) {$1, 2$}
;}\begin{tikzpicture}[auto]
\matrix[column sep=.3cm, row sep=.3cm,ampersand replacement=\&]{
         \& \nodea  \&         \\ 
         \&         \& \nodeb  \\
};

\path[ultra thick, red] (a) edge (b);
\end{tikzpicture}}},
    \scalebox{0.4}{{ \newcommand{\nodea}{\node[draw,circle] (a) {}
;}\begin{tikzpicture}[auto]
\matrix[column sep=.3cm, row sep=.3cm,ampersand replacement=\&]{
 \nodea  \\
};
\end{tikzpicture}}
{  \newcommand{\nodea}{\node[draw,circle] (a) {$1, 2$}
;}\begin{tikzpicture}[auto]
\matrix[column sep=.3cm, row sep=.3cm,ampersand replacement=\&]{
 \nodea  \\
};
\end{tikzpicture}}},
    \scalebox{0.4}{{ \newcommand{\nodea}{\node[draw,circle] (a) {$1, 2, 3$}
;}\begin{tikzpicture}[auto]
\matrix[column sep=.3cm, row sep=.3cm,ampersand replacement=\&]{
 \nodea  \\
};
\end{tikzpicture}}}.

The following theorem is a generalisation of the construction of \cite{foissy_2011} for $\FQSym$
and permutations to $\WQSym$ and packed words.
\begin{theorem} \label{thm:bij} For all $n\in\NN$ we have the three following
  equalities : 
  \begin{equation*}
    \dim(\WQSym_n) = \#\PForest_n
    \quad\text{and}\quad
    \dim(\Prim_n) = \#\PTree_n
    \quad\text{and}\quad
    \dim(\TPrim_n) = \#\PParticular_n
  \end{equation*}
\end{theorem}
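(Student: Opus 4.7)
The strategy is to prove the three equalities in sequence: the first by exhibiting an explicit bijection, and the latter two by combining this bijection with the generating function identities of Foissy already recalled in the background.

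\emph{Step 1 (first equality).} Since $\dim(\WQSym_n) = \#\PW_n$, it suffices to show that the map $F$ of \cref{def:construction} restricts to a bijection $\PW_n \to \PForest_n$, equivalently (by the forest/tree decomposition) that $T$ restricts to a bijection from irreducible packed words of size $n$ onto $\PTree_n$. I would argue by mutual induction on $n$. The forest decomposition is well-defined and invertible by \cref{gd_fact}. For the tree map, \cref{phi_fact} provides the unique factorization $w = u \ins \phi_I(v)$ with $u$ of maximal length. The inductive hypothesis turns $u$ into a packed forest $F(u)$ and $v$ into a packed forest $F(v)$. The main verification is that the inequalities on $I$ in \cref{phi_fact}, namely $i_1 \leq |v_1|$ and $(\sum|v_i|+|I|)+1-i_p \leq |v_r|$, are precisely those defining a packed tree in \cref{eq:forest_def} once one matches $|v_j|$ with $\omega(r_j)$ and $|I|+\sum |v_i|$ with $\omega_r(t)$. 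Conversely, given a packed tree $\Node(I, f_\ell, f_r)$, applying $F^{-1}$ to each forest and reversing $\ins$ and $\phi_I$ produces an irreducible packed word; the maximality of $|u|$ matches the nonemptiness requirement for $f_\ell$ corresponding to the outer recursion, and the inequality constraints guarantee that $v$ really decomposes into irreducibles of the prescribed sizes.

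\emph{Step 2 (second equality).} From Step 1 and the forest-is-a-sequence-of-trees decomposition, the Hilbert-like series $t(z) \eqdef \sum_{n\ge 1} \#\PTree_n z^n$ and $\SF_+(z) \eqdef \sum_{n\ge 1}\#\PForest_n z^n$ satisfy
\begin{equation*}
1 + \SF_+(z) \;=\; \frac{1}{1 - t(z)}, \qquad \text{equivalently} \qquad \SF_+ = \frac{t}{1-t}.
\end{equation*}
By Step 1, $\SF_+ = \SA$, so $\SA = t/(1-t)$. Comparing with the identity $\SA = \SP/(1-\SP)$ from \cref{left_prod} and solving, one obtains $t = \SP$, that is $\#\PTree_n = \dim(\Prim_n)$.

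\emph{Step 3 (third equality).} I would decompose a packed tree by peeling off its left forest: every $t = \Node(I, f_\ell, f_r) \in \PTree_n$ is uniquely determined by the pair $\bigl(\Node(I,[],f_r),\, f_\ell\bigr) \in \PParticular_a \times \PForest_b$ with $a+b=n$, and conversely any such pair yields a packed tree (since the constraints of \cref{eq:forest_def} on $I$ involve only the right forest). Letting $q(z) \eqdef \sum_{n\ge 1}\#\PParticular_n z^n$, this gives $t(z) = q(z)\bigl(1 + \SF_+(z)\bigr) = q(z)(1+\SA)$. Combined with $t = \SP$ from Step 2 and the Foissy identity $\SP = \ST(1+\SA)$, we conclude $q = \ST$, i.e.\ $\#\PParticular_n = \dim(\TPrim_n)$.

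\emph{Expected main obstacle.} The bulk of the work is Step 1: one must verify that the two inequalities of \cref{phi_fact} translate \emph{exactly} into the two inequalities of \cref{eq:forest_def}, and in particular that $|v_j|$ genuinely equals $\omega(r_j)$ along the recursion, so that the image of $F$ is neither too small nor too large. Once the bijection is nailed down, Steps 2 and 3 are purely formal manipulations of generating functions.
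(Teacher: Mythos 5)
Your proposal is correct and follows essentially the same route as the paper: the bijection $F$ of \cref{def:construction} (whose only delicate point is indeed matching the inequalities of \cref{phi_fact} with \cref{eq:forest_def}) gives the first equality, and the second and third follow from \cref{left_prod} and \cref{brace} via the Hilbert-series identities $\SA=\SP/(1-\SP)$ and $\SP=\ST(1+\SA)$. You merely make explicit the generating-function bookkeeping that the paper phrases as a bijection between packed trees and planar trees decorated by elements of $\PParticular$.
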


\begin{proof}
  The construction of \cref{def:construction} defines a bijection from
  packed words to packed forests which restrict to a bijection from
  irreducible packed words to packed trees. Thanks to \cref{left_prod}
  this proves the first two equalities. Recall that a basis of
  primitive elements is given by~\cref{brace} as ordered trees
  decorated by totally primitive elements. If we consider the label
  $I$ together with the right forest of each node as decoration, we
  get that packed trees are in bijection with planar trees decorated
  by $I$ and an element of $\PForest^I$ that is an element
  of~$\PParticular$.
\end{proof}


\section{A basis for totally primitive elements}
\label{sect3_Pbasis}
In this section we construct a basis of primitive and totally primitive
elements of $\WQSym$. Thanks to~\cref{thm:bij} we now have the combinatorial objects to
index those basis. To have the linear independency, we need to show that the
decomposition through maximum is compatible with the algebraic structure.

\subsection{Decomposition through maximums and totally primitive elements}

\begin{defi} \label{Phi} Let $I = (i_1, \ldots, i_p)$ with $0 <i_1< \ldots< i_p$. We define the linear map
  $\Phi_I:\WQSym \to \WQSym$ as follows: for all $n\in\NN$ and $w = w_1\cdot w_2 \cdots w_n \in\PW_n$,
  \begin{equation}
    \Phi_I(\RR_w) \eqdef \left\{
      \begin{array}{rl}
        \RR_{\phi_I(w)} & \text{if } i_p \leq n + p,\\
        0 & \text{if } i_p > n + p\,.
      \end{array}\right.
  \end{equation}
\end{defi}

\begin{defi} \label{tau} Let $I = (i_1, \ldots, i_p)$ with $0 <i_1< \ldots< i_p$. We define the projector
  $\tau_I: \WQSym \to \WQSym$ as follows: for all $n\in\NN$ and $w = w_1\cdot w_2 \cdots w_n \in\PW_n$,
  \begin{equation}
    \tau_I(\RR_w) \eqdef \left\{
      \begin{array}{rl}
        \RR_w & \text{if } w_i = \max(w) \text{ if and only if } i \in I,\\
        0 & \text{else.}
      \end{array}\right.
  \end{equation}
  These are orthogonal projectors in the sense that $\tau_I^2 = \tau _I$ and $\tau_I \circ \tau_J = 0$ ($I \neq J$).
\end{defi}

\begin{lem}\label{im_eq} For any $I$, we have $Im(\Phi_I) = Im(\tau_I)$ where
  $Im(f)$ denotes the image of $f$.
\end{lem}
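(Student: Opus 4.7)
The plan is to prove both inclusions directly by tracking what each map does on the basis $(\RR_w)_{w \in \PW}$, using \cref{phi_bij} as the essential tool. The point is that $\operatorname{Im}(\tau_I)$ has a natural monomial basis — namely those $\RR_W$ where the positions of $\max(W)$ are exactly $I$ — and by \cref{phi_bij} this is precisely the set of $W$ that admit a (unique) expression $W = \phi_I(w)$ with $w$ packed.

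First I would establish the easier inclusion $\operatorname{Im}(\Phi_I) \subseteq \operatorname{Im}(\tau_I)$. A spanning set for $\operatorname{Im}(\Phi_I)$ is given by the nonzero images $\Phi_I(\RR_w) = \RR_{\phi_I(w)}$ for $w \in \PW_n$ with $i_p \leq n + p$. By the very definition of $\phi_I$ (\cref{phi}), the letter $m = \max(w) + 1$ is inserted precisely at the positions in $I$ and becomes the maximum of $\phi_I(w)$, so $\phi_I(w) \in \PW_{n+p}^I$ and hence $\tau_I(\RR_{\phi_I(w)}) = \RR_{\phi_I(w)}$. Thus each generator of $\operatorname{Im}(\Phi_I)$ is fixed by $\tau_I$, giving the desired inclusion.

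For the reverse inclusion $\operatorname{Im}(\tau_I) \subseteq \operatorname{Im}(\Phi_I)$, I would start from a basis of $\operatorname{Im}(\tau_I)$, which consists of the $\RR_W$ with $W \in \PW_\ell^I$ for varying $\ell \geq i_p$ (smaller $\ell$ give no contribution since $\PW_\ell^I$ is empty). Applying \cref{phi_bij} to each such $W$, there is a unique packed word $w$ of length $n = \ell - p$ with $W = \phi_I(w)$, and the length condition $i_p \leq n + p = \ell$ is automatic. Therefore $\RR_W = \Phi_I(\RR_w) \in \operatorname{Im}(\Phi_I)$, so every basis element of $\operatorname{Im}(\tau_I)$ lies in $\operatorname{Im}(\Phi_I)$.

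There is no real obstacle here: once one notices that $\phi_I$ is a bijection from $\PW_n$ onto $\PW_{n+p}^I$ (\cref{phi_bij}) and that the image basis of $\tau_I$ is exactly $\{\RR_W : W \in \bigsqcup_\ell \PW_\ell^I\}$, the two images coincide on the level of bases and the statement follows. The only care needed is checking that the vanishing case $i_p > n+p$ of $\Phi_I$ does not hide anything, but this is just the observation that $\PW_\ell^I$ is empty whenever $\ell < i_p$, so nothing is missed on the $\tau_I$ side either.
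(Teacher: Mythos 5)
Your proof is correct. The paper actually states \cref{im_eq} without proof, treating it as an immediate consequence of \cref{phi_bij}; your argument — identifying the monomial basis $\{\RR_W : W \in \bigsqcup_\ell \PW_\ell^I\}$ of $Im(\tau_I)$ and matching it with the generators $\RR_{\phi_I(w)}$ of $Im(\Phi_I)$ via the bijection $\phi_I : \PW_n \to \PW_{n+p}^I$, with the observation that the vanishing case $i_p > n+p$ corresponds exactly to $\PW_\ell^I = \emptyset$ — is precisely the intended justification, carried out carefully in both directions.
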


\begin{lem} \label{tot_stable} For any~$I$, the projection by $\tau_I$ of a
  totally primitive element is still a totally primitive element, so that
  $\tau_I(\TPrim) = Im(\tau_I) \bigcap \TPrim$. Moreover,
  \begin{equation}\label{eq:direct_sum}
    \TPrim = \bigoplus_I Im(\tau_I) \cap \TPrim\,.
  \end{equation}
\end{lem}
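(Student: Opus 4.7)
The plan is to show that each $\tau_I$ preserves $\TPrim$ by establishing a commutation-type identity with each half coproduct, and then to deduce the direct sum from the fact that $(\tau_I)_I$ is a complete system of orthogonal projectors on $\WQSym$. Throughout I will write $I(w)$ for the set of positions of $\max(w)$ in a packed word $w$, so that $\tau_I(\RR_w) = \RR_w$ when $I(w) = I$ and $0$ otherwise.

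For $\Delta_\prec$ I expect the clean commutation
\[
\Delta_\prec \circ \tau_I = (\tau_I \otimes \mathrm{id}) \circ \Delta_\prec.
\]
Indeed, by \cref{left_coprod}, each term of $\Delta_\prec(\RR_w)$ comes from a split at position $i \ge \max I(w)$ with disjoint letter sets, so the maxima of $w$ all sit in the left block and the right block consists of strictly smaller letters. Packing $w_1 \cdots w_i$ therefore leaves the positions of its maxima unchanged, and every left tensor factor lies in $Im(\tau_{I(w)})$. The commutation follows by inspection on the basis, and applied to $x \in \TPrim$ yields $\Delta_\prec(\tau_I(x)) = (\tau_I \otimes \mathrm{id})\Delta_\prec(x) = 0$.

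The delicate case, and the real obstacle, is $\Delta_\succ$: the maxima of $w$ now lie in the right factor and their positions in $pack(w_{i+1}\cdots w_n)$ form the shifted set $I(w) - i$, which depends on $i$. The correct commutation must then involve the grading. Writing $\pi_s$ for the projector onto $\WQSym_s$ and conventionally setting $\tau_{I-s} = 0$ when $I - s$ contains a nonpositive integer, I would prove
\[
\Delta_\succ \circ \tau_I = \Bigl( \sum_{s \geq 1} \pi_s \otimes \tau_{I-s} \Bigr) \circ \Delta_\succ
\]
by checking it on $\RR_w$: the $s$-th graded piece of $\Delta_\succ(\RR_w)$ has right tensor factor in $Im(\tau_{I(w)-s})$, so the right-hand side selects precisely the $w$ with $I(w) = I$. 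Evaluated at any $x \in \TPrim$ both sides vanish, giving $\Delta_\succ(\tau_I(x)) = 0$.

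Combining the two steps yields $\tau_I(\TPrim) \subseteq Im(\tau_I) \cap \TPrim$, and the reverse inclusion is immediate since $y = \tau_I(y)$ for $y \in Im(\tau_I)$. For the direct sum, the $\tau_I$ are orthogonal by \cref{tau}, and each basis element $\RR_w$ is captured by exactly one $\tau_I$ (namely $I = I(w)$), so $\sum_I \tau_I = \mathrm{id}_{\WQSym}$ and $\WQSym = \bigoplus_I Im(\tau_I)$. Every $x \in \TPrim$ then decomposes as $\sum_I \tau_I(x)$ with each summand in $Im(\tau_I) \cap \TPrim$; the reverse inclusion is obvious, and directness is inherited from that of $\WQSym$.
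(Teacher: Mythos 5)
Your proof is correct and follows essentially the same route as the paper: a commutation identity between $\tau_I$ and each half coproduct, followed by the observation that the $\tau_I$ form a complete system of orthogonal projectors. You are in fact more careful than the paper on the $\Delta_\succ$ side, where the paper only asserts that ``the same argument works on the right'': as you note, the naive mirror identity $\Delta_\succ\circ\tau_I=(\mathrm{Id}\otimes\tau_I)\circ\Delta_\succ$ fails because the maxima land in the right factor at shifted positions, and your graded identity $\Delta_\succ\circ\tau_I=\bigl(\sum_{s\geq 1}\pi_s\otimes\tau_{I-s}\bigr)\circ\Delta_\succ$ is the correct repair, which still suffices since it sends $0$ to $0$.
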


\begin{proof}
  Let $w$ a packed word. We have
  $\Delta_\prec(\tau_I(\RR_w)) = (\tau_I \otimes Id) \circ \Delta_\prec(\RR_w)$ by definition of
  $\tau$ and $\Delta_\prec$. By linearity, for all $p \in \TPrim,$ we have
  $\Delta_\prec(\tau_I(p)) = (\tau_I \otimes Id) \circ \Delta_\prec(p) = 0$. The same argument works on the right so
  that~$\tau_I(p)\in\TPrim$. Morevover $\tau_I$ are orthogonal projectors so
  $\TPrim = \bigoplus_I \tau_I(\TPrim) = \bigoplus_I Im(\tau_I)\cap\TPrim$.
\end{proof}

\subsection{The new basis ~\texorpdfstring{$\PP$}{PP}}

\begin{defi} \label{P} Let $t_1, \ldots, t_k \in \PTree$ and $f_l = [\ell_1, \ldots, \ell_g], f_r \in \PForest$,
  \begin{align}
    \label{P1}  \PP_{\scalebox{0.5}{}}  & \eqdef \RR_1,\\
    \label{PF} \PP_{t_1, \ldots, t_k} & \eqdef \PP_{t_k} \prec (\PP_{t_{k-1}} \prec (\ldots  \prec \PP_{t_1})\ldots),\\
    \label{PP} \PP_{\Node(I, [], f_r)} & \eqdef \Phi_{I}(\PP_{f_r}),\\
    \label{PT} \PP_{\Node(I, f_l = [\ell_1, \ldots, \ell_g], f_r)} & \eqdef \langle\PP_{l_1}, \PP_{l_2}, \ldots, \PP_{l_g}; \Phi_{I}(\PP_{f_r}) \rangle.
  \end{align}
\end{defi}

\begin{ex}
  \begin{align*}
    \PP_{\scalebox{0.5}{{ \newcommand{\nodea}{\node[draw,circle] (a) {$1, 3$}
;}\newcommand{\nodeb}{\node[draw,circle] (b) {}
;}\begin{tikzpicture}[auto]
\matrix[column sep=.3cm, row sep=.3cm,ampersand replacement=\&]{
         \& \nodea  \&         \\ 
         \&         \& \nodeb  \\
};

\path[ultra thick, red] (a) edge (b);
\end{tikzpicture}}
{  \newcommand{\nodea}{\node[draw,circle] (a) {}
;}\newcommand{\nodeb}{\node[draw,circle] (b) {}
;}\begin{tikzpicture}[auto]
\matrix[column sep=.3cm, row sep=.3cm,ampersand replacement=\&]{
         \& \nodea  \\ 
 \nodeb  \&         \\
};

\path[ultra thick, red] (a) edge (b);
\end{tikzpicture}}}} &= \PP_{\scalebox{0.5}{}} \prec \PP_{\scalebox{0.5}{}} = (\PP_{\scalebox{0.5}{}} \prec \PP_{\scalebox{0.5}{}} - \PP_{\scalebox{0.5}{}} \succ \PP_{\scalebox{0.5}{}}) \prec \Phi_{1,3}(\PP_{\scalebox{0.5}{}})\\
                                                               &= \RR_{14342} + \RR_{41342} + \RR_{43142} + \RR_{43412} - \RR_{24341} - \RR_{42341} - \RR_{43241} - \RR_{43421}
  \end{align*}
\end{ex}

\begin{theorem} \label{thm:P} For all $n \in \NN_{>0}$
  \begin{multicols}{2}
  \begin{enumerate}
  \item \label{PForest} $(\PP_f)_{f\in\PForest_n}$ is a basis of $\WQSym_n$,
  \item \label{PTree} $(\PP_t)_{t\in\PTree_n}$ is a basis of $\Prim_n$,
  \item \label{PParticular} $(\PP_t)_{t\in\PParticular_n}$ is a basis of $\TPrim_n$.
  \end{enumerate}
  \end{multicols}
\end{theorem}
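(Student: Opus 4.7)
The plan is a triangular induction on $n$: first establish part~(3), then derive parts~(2) and~(1) at the same degree using the structure theorems \cref{brace} and \cref{left_prod} respectively. Since \cref{thm:bij} already matches the cardinalities of $\PParticular_n$, $\PTree_n$ and $\PForest_n$ with $\dim\TPrim_n$, $\dim\Prim_n$ and $\dim\WQSym_n$, at each step it suffices to show that the candidate family lies in the claimed subspace and is linearly independent. The base case $n=1$ is trivial: the unique packed tree is a leaf, $\PP_{\bullet} = \RR_1$ generates $\WQSym_1 = \Prim_1 = \TPrim_1$.

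For the inductive step, I would first handle part~(3). Assuming the theorem holds up to degree $n-1$, \cref{tot_stable} gives $\TPrim_n = \bigoplus_I Im(\tau_I) \cap \TPrim_n$, and by \cref{im_eq} the linear map $\Phi_I$ sends the $\RR$-basis of $\WQSym_{n-|I|}$ bijectively onto the $\RR$-basis of $Im(\tau_I) \subseteq \WQSym_n$, so it is a linear isomorphism onto its image. The inductive hypothesis (part~(1) at size $n-|I|$) yields a basis $(\PP_{f_r})_{f_r\in\PForest_{n-|I|}}$ of $\WQSym_{n-|I|}$. Hence to prove part~(3) at degree $n$, I must show that the subfamily indexed by $\PForest_{n-|I|}^I$ is mapped by $\Phi_I$ exactly into $Im(\tau_I) \cap \TPrim_n$, and that this subfamily has the correct dimension.

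The technical heart is therefore the claim that $\Phi_I(\PP_{f_r})$ is totally primitive precisely when $f_r \in \PForest^I$. My plan is to compute $\Delta_\prec$ and $\Delta_\succ$ applied to $\Phi_I(\RR_w)$ by tracking global descents in $\phi_I(w)$: each admissible splitting position of $\phi_I(w)$ projects either onto a splitting position of $w$ or onto a gap between two inserted maxima. The two bounds $i_1 \leq \omega(r_1)$ and $\omega_r(t) + 1 - i_p \leq \omega(r_d)$ of \cref{eq:forest_def} are precisely the conditions that forbid any surviving splitting after recursive expansion, so they are equivalent to the vanishing of both half-coproducts. This parallels the uniqueness statement of \cref{phi_fact}, and I would proceed by a secondary induction on $f_r$, using that the smaller $\PP_{f_r}$ already lie in the subspaces dictated by the recursion of \cref{P}.

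With part~(3) at degree $n$ in hand, part~(2) follows from \cref{brace}: a basis of $\Prim_n$ is indexed by planar trees decorated by totally primitive elements, and via the bijection in the proof of \cref{thm:bij} each packed tree $\Node(I, f_\ell, f_r)$ corresponds to such a decorated tree whose root carries $\Phi_I(\PP_{f_r}) \in \TPrim$. The recursion \cref{PT} is exactly Foissy's brace bracket applied to these basis elements, so linear independence follows from the freeness of $\Prim$ as a brace algebra over $\TPrim$ stated in \cref{brace}. Finally, part~(1) follows from \cref{left_prod}, which realizes $\WQSym_n$ as the degree-$n$ part of $T^+(\Prim)$ via iterated left products, and formula \cref{PF} is exactly the image of the pure tensor $\PP_{t_k} \otimes \cdots \otimes \PP_{t_1}$ of the basis primitive elements constructed in part~(2). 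The main obstacle is the combinatorial identification of \cref{eq:forest_def} with total primitivity described in the preceding paragraph.
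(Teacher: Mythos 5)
Your overall architecture coincides with the paper's: the same triangular induction (base case $n=1$; \cref{brace} turns part~3 into part~2 at each degree, \cref{left_prod} turns part~2 into part~1, and parts~1--2 in degree $<n$ feed part~3 in degree $n$), the same use of \cref{tot_stable} for the direct sum over $I$, of the injectivity of $\Phi_I$ for linear independence, and of \cref{thm:bij} for the dimension count. The problem lies in the step you yourself call the technical heart. Your claim that the bounds of \cref{eq:forest_def} ``forbid any surviving splitting'' of $\phi_I(w)$, so that total primitivity of $\Phi_I(\PP_{f_r})$ can be read off by tracking admissible splittings of the individual words, is not correct. (A minor point first: the admissible splittings for $\Delta_\prec,\Delta_\succ$ are the positions where prefix and suffix use disjoint alphabets, not the global descents.) The real issue is that the vanishing of the half-coproducts is a \emph{cancellation} phenomenon, not an absence of splittings. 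Take $t=\Node\bigl((2),[\,],[r_1]\bigr)$ with $r_1=\Node\bigl((1),[\Node((1),[\,],[\,])],[\,]\bigr)$, the second packed forest of weight $3$ listed in the paper; then $\PP_{r_1}=\RR_{12}-\RR_{21}$ and $\PP_t=\Phi_{(2)}(\PP_{r_1})=\RR_{132}-\RR_{231}$. Both words admit the splitting after position $2$, and $\Delta_\prec(\RR_{132})=\Delta_\prec(\RR_{231})=\RR_{12}\otimes\RR_{1}\neq 0$; the element is totally primitive only because these two terms cancel. A word-by-word analysis of splittings of $\phi_I(w)$ cannot detect cancellations between different $w$ in the support of $\PP_{f_r}$, so your secondary induction as described cannot close.

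The mechanism you need instead (and the one the paper uses) is the following. Every admissible splitting of $\phi_I(w)$ must put all $p$ inserted maxima on one side, so it corresponds to a splitting of $w$ whose tensor factors obey a degree constraint governed by $i_1$ and $i_p$; by linearity, $\Delta_\prec\circ\Phi_I$ and $\Delta_\succ\circ\Phi_I$ therefore factor through degree-truncated versions of $\tilde{\Delta}$, namely through $(\pi_{<i}\otimes\pi_{<j})\circ\tilde{\Delta}$ for suitable $i,j$ determined by $i_1$ and $n+1-i_p$. These truncated coproducts vanish on $\PP_{f_r}$ for $f_r\in\PForest^I$ not because splittings are absent, but because \cref{left_prod} makes $\tilde{\Delta}$ act by deconcatenation on the $\PP$ basis, and the weight bounds $i_1\le\omega(r_1)$ and $\omega_r(t)+1-i_p\le\omega(r_d)$ push every deconcatenation term outside the truncated degree range. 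Your remark that ``the smaller $\PP_{f_r}$ already lie in the subspaces dictated by the recursion'' points in this direction, but the argument has to be run on the $\PP$ basis via \cref{left_prod}, not on the $\RR$ basis via splittings of packed words. Once this step is repaired, the rest of your plan (note that only one inclusion plus the dimension count is needed, not the if-and-only-if you state) goes through exactly as in the paper.
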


\begin{proof} As $\dim(\WQSym_1) = \dim(\Prim_1) = \dim(\TPrim_1) = 1$ the base
  case is trivial. We argue in a mutually recursive way: by \cref{left_prod},
  \cref{PTree} up to degree $n$ implies \cref{PForest} up to degree
  $n$. Similarly, \cref{brace} shows that \cref{PParticular} up to degree $n$
  implies \cref{PTree} up to degree $n$. By induction it is sufficient to show
  that \cref{PForest,PTree} up to degree $n-1$ implies \cref{PParticular} for
  $n$. 

  For all $k \in \NN$, let $\pi_k$ be the projector on the homogeneous component of
  degree $k$ of $\WQSym$. We define $\pi_{<k} \eqdef \sum_{i=0}^{k-1}\pi_i$. Fix
  $I$ of length $p$. In the coproduct $\Delta_\prec(\Phi_I(\RR_u))$ all the maximums must be
  in the left tensor factor, which therefore must be at least of degree $i_p$.
  By linearity, this can be used to prove that
  $\Delta_\prec(\Phi_I(\PP_{f})) = 0$. A similar reasoning applies to
  $\Delta_\succ(\Phi_I(x))$, so that thanks to the conditions of~\cref{eq:forest_def},
  $\PP_t$ with $t \in \PParticular_n$ is totally primitive.

  In other words, if we define
  $\Prim_n(i,j) \eqdef \Ker(\pi_{<i} \otimes \pi_{<j}) \circ \tilde{\Delta}$ we have proved that
  the image of the restriction to $\Prim_n(i_1, n + 1 - i_p)$ of $\Phi_I$ is
  included in $Im(\tau_I) \cap \TPrim_n$. By \cref{left_prod},
  $\{\PP_f\mid f\in\PForest_{n-p}^I\}$ is a basis of
  $\Prim_{n-p}(i_1, n+1-i_p)$. Since $\Phi_I$ is injective on $\WQSym_{n-p}$ then
  $\{\Phi_I(\PP_f)\mid f\in\PForest_{n-p}^I\}$ are linearly independent. Then by
  \cref{tot_stable} $\{\PP_t\mid t\in\PParticular_n\}$ are linearly independent. By
  \cref{thm:bij} it is a basis of $\TPrim_n$.
\end{proof}


\section*{Conclusion}

Our next obvious step is to adapt the decomposition through maximum
of~\cref{Phi,tau} to the dual in order to get an explicit bidendriform
isomorphism from $\WQSym$ to its dual. Another generalization which should be
easy is to do the same for the Hopf algebra $\PQSym$ of parking
functions. Indeed $\PQSym$ is a bidendriform bialgebra~\cite{novthi_2006}, and
thus self-dual, but no explicit isomorphism with the dual is known.

\paragraph*{Acknowledgments}
The computation and tests needed for this research were done using the
open-source mathematical software \textsc{Sage} and its combinatorics
features developed by the \textsc{Sage-combinat} community. I
particularly thank F. Hivert and V. Pons for all comments on the
writing.


\bibliography{biblio}
\bibliographystyle{ieeetr}

\end{document}